\newtheorem{theorem}{Theorem}[section]
\newtheorem{lemma}[theorem]{Lemma}
\newtheorem{corollary}[theorem]{Corollary}
\newtheorem{proposition}[theorem]{Proposition}
\newtheorem*{theorem*}{Proposition~\ref{inv.null}}
\newtheorem*{p*}{Proposition~\ref{hsop}}
\theoremstyle{definition}
\newtheorem{definition}[theorem]{Definition}
\newtheorem{example}[theorem]{Example}
\newtheorem{remark}[theorem]{Remark}
\newcommand{\C}{{\mathbb C}}
\newcommand{\N}{{\mathbb N}}
\newcommand{\SL}{{\operatorname{SL}_n \times \operatorname{SL}_n}}
\newcommand{\B}{{$\beta(R(n,m))$}}
\title{Hilbert series and degree bounds for matrix (semi-)invariants }
\author{Visu Makam}
\thanks{The author was partially supported by NSF grant DMS-1361789}
\begin{document}

\begin{abstract}
We study the ring $R(n,m)$ of invariants for the left-right action of $\SL$ on $m$-tuples of $n \times n$ complex matrices. We show that $R(3,m)$ is generated by invariants of degree $\leq 309$ for all $m$. Then, we use a combinatorial description of the invariants to show that $R(n,m)$ cannot be generated by invariants of degree $<n^2$ for large $m$. We also compute the Hilbert series for several cases. 
\end{abstract}

\maketitle

\section{Introduction}\label{sec1}
We fix our ground field to be $\C$, the field of complex numbers.
\subsection{The ring $R(n,m)$}
Let $\operatorname{Mat}_{n,n}$ be the space of $n \times n$ matrices. We consider the action of $\SL$  on $m$-tuples of matrices, i.e, $\operatorname{Mat}_{n,n}^{m}$ given as follows: 

For $(A,B) \in \SL$, and $(X_1,X_2,\dots ,X_m) \in \operatorname{Mat}_{n,n}^m$, we have
$$ (A,B) \circ (X_1,X_2,\dots, X_m) = (AX_1B^{-1},AX_2B^{-1}, \dots ,AX_mB^{-1}). $$

 We are interested in the ring of polynomial invariants $\C[\operatorname{Mat}_{n,n}^{m}]^{\SL}$, which we denote by $R(n,m)$. This is a graded subring of the polynomial ring $\C[\operatorname{Mat}_{n,n}^{m}]$.

\subsection{The quiver perspective} \label{det.desc}

The $m$-Kronecker quiver is the quiver with 2 vertices $x$ and $y$ with $m$-arrows from $x$ to $y$ :

\begin{center}
\begin{tikzpicture}

\path node (x) at (0,0) []{x};
\path node (y) at (3,0) []{y};

\draw [->,thick] (x) .. controls (1,0.5) and (2,0.5) ..
node[midway,above] {$a_1$}
(y);

\draw [->,thick] (x) .. controls (1,-0.5) and (2,-0.5) ..
node[midway,below] {$a_m$}
(y);

\path node at (1.5,0.2) [] {.};
\path node at (1.5,0) [] {.};
\path node at (1.5,-0.2) [] {.};

\node at (3.5,0) []{$.$};
\end{tikzpicture}
\vspace{5pt}
\end{center}

The ring $R(n,m)$ is the ring of  semi-invariants for the $m$-Kronecker quiver for the dimension vector $(n,n)$. Semi-invariants for quivers have been studied in \cite{DW},\cite{DZ},\cite{Sch1}, and \cite{SVd}, where they exhibit a $\C$-linear spanning set of the semi-invariants. 

Unraveling the theory for our situation, we get a determinantal description of the semi-invariants by the following construction. Let $\{t_{i,j}^p | 1 \leq i,j \leq k, 1\leq p \leq m \}$ be a set of $k^2m$ indeterminates. We describe a $kn\times kn$ matrix $\chi_k$ by splitting it into blocks of size $n \times n$ and describing each block individually. The $(i,j)^{th}$ block is $\sum_{p=1}^m t_{i,j}^p X_p$.  
 
For example, $$\chi_1 = \sum_{p=1}^m t_{1,1}^p X_p $$ 

and

 \renewcommand{\arraystretch}{2}
$$\newcommand*{\temp}{\multicolumn{1}{r|}{}}
\chi_2=\left[\begin{array}{ccc}
\sum\limits_{p=1}^m t^p_{1,1} X_p  &\temp & \sum\limits_{p=1}^m t^p_{1,2} X_p \\ \cline{1-3}
\sum\limits_{p=1}^m t^p_{2,1} X_p &\temp & \sum\limits_{p=1}^m t^p_{2,2} X_p\\
\end{array}\right] .
$$\\

 View $\det(\chi_k)$ as a polynomial in the indeterminates $t_{i,j}^p$. The coefficients are expressions in the entries of the matrices. Further these coefficients are homogeneous polynomials (of degree $kn$) in the entries of the matrices, and they are invariant under the action of $\SL$. 
 
We get the following : 
  
\begin{enumerate}
\item There are semi-invariants in degree $d$ only if $n|d$.
\item  The coefficients of $\det(\chi_k)$ form a $\C$-linear spanning set for the invariants of degree $kn$.

\end{enumerate}

\begin{remark}
The spanning set described above is usually not a basis, and it is difficult to extract a basis. 
\end{remark}

\subsection{Degree bounds and Hilbert series}

\begin{definition}
For a rational finite dimensional representation $V$ of a linearly reductive group $G$, we define $\beta(\C[V]^G)$ to be the smallest integer $N$ such that the invariants of degree~$\leq~N$ generate $\C[V]^G$. 
\end{definition}

Computing $\beta(\C[V]^G)$ is a hard problem in general. There is a general method to get upper bounds, but it is far from being satisfactory in our case. See \cite[Theorem 4.7.4, Proposition 4.7.12, Proposition 4.7.14]{DK}.
 
A theorem of Weyl (see \cite[Section~7.1,Theorem~A]{KP}) tells us that $\beta(R(n,m)) \leq \beta (R(n,n^2))$. So we make the following definition.

\begin{definition}
 $\beta_U(n) := \beta(R(n,n^2))$ is called the universal bound.
 \end{definition}

In some small cases, tight upper bounds have been computed. 

\begin{itemize}
\setlength\itemsep{10pt}
\item $\mathbf{m=1:}$ It is easy to see that $R(n,1) = \C[\det(X_1)]$, i.e a polynomial ring in one variable.  

\item $\mathbf{m=2:}$ The coefficients of $\det(t_1 X_1 + t_2 X_2)$ span the invariants of degree $n$. It turns out that these $n+1$ coefficients are algebraically independent and generate $R(n,2)$. Hence $R(n,2)$ is in fact a polynomial ring! These results can be found in \cite{Hap}, \cite{Hap2}, and \cite{Koike}. \\
\end{itemize}
The above two situations deal with finite and tame quivers as the $1$-Kronecker quiver is finite and the $2$-Kronecker quiver is tame. For $m \geq 3$, the situation gets considerably more complicated. The rings are not polynomial rings anymore. 

Before we discuss more previous results, we discuss a few general definitions and results. Details can be found in \cite{DK}. The ring $R(n,m)$ is a finitely generated graded algebra, and hence has a homogeneous system of parameters (hsop), i.e a set of algebraically independent homogeneous elements $f_1,\dots, f_d$ such that $R(n,m)$ is a finitely generated module over $\C[f_1,\dots, f_d]$. Further the Hochster-Roberts theorem (see \cite[Theorem~2.5.5]{DK},\cite{HR}) tells us that $R(n,m)$ is Cohen-Macaulay, and hence is a free module over any hsop. The free module generators can be chosen to be homogeneous. The elements of the hsop are called primary invariants and the free module generators are called secondary invariants. In particular, this gives a generating set for $R(n,m)$ and hence an upper bound for \B. \\

\begin{definition} {\textbf{Hilbert series:}}
The Hilbert series of a graded $\C$-algebra $R = \oplus_{d \in \N} R_d$ is defined as $$H(R,t) = \sum_d \dim(R_d)t^d.$$ 
\end{definition}

In the above definition, $\N$ denotes the set of non-negative integers. Observe that a set of primary and secondary invariants for $R(n,m)$ suffice to compute its Hilbert series, and write it as a rational function.

\begin{definition} {\textbf{Multi-graded Hilbert series:}}
If a $\C$-algebra $R$ has a multi-grading, i.e. $R = \oplus_{d \in \N^l} R_d$, then we define its multi-graded Hilbert series $$\mathbb{H}(R,t_1,t_2,\dots,t_l) = \sum_{d = (d_1,d_2,\dots,d_l) \in \N^l} \dim(R_d)t_1^{d_1}t_2^{d_2}\dots t_l^{d_l}.$$  
\end{definition}

\begin{remark}
Any multi-graded algebra can be viewed as a (singly) graded algebra by considering the total degree. Then one can recover the Hilbert series from the multi-graded Hilbert series by specializing all the variables $t_i = t, 1 \leq i \leq l$, i.e,

$$H(R,t) = \mathbb{H}(R,t,\dots, t).$$ 
\end{remark}

\begin{example}
Let $R = \C[x_1,x_2,\dots,x_l]$ be the polynomial ring in $l$ variables with the natural multi-grading. Then $$\mathbb{H}(R,t_1,t_2,\dots,t_l) = \prod\limits_{i=1}^l \frac{1}{1-t_i},$$ and $$H(R,t) =\mathbb{H}(R,t,\dots,t) =  \frac{1}{(1-t)^l}\  .$$ 
\end{example}

The ring $R(n,m)$  is a multi-graded subring of $\C[\operatorname{Mat}_{n,n}^m]$ and hence has an $\N^m$-grading. One can then take total degree to get an $\N$-grading.

\begin{itemize}

\item \textbf{n = 2 :} In \cite{Dom00a}, Domokos computes the multi-graded Hilbert series of $R(2,m)$ by computing it as an integral. See \cite{DK} for a general method to compute multi-graded Hilbert series by computing integrals. He uses this to deduce  $\beta_U (2) \leq 4$, and is able to show that the bound is tight, and hence we get $\beta_U(2) = 4$.  
 
\item \textbf{n = 3, m = 3 :} In \cite{Dom00b}, Domokos computes the multi-graded Hilbert series of $R(3,3)$. This time, he makes use of the fact that $R(n,m) \twoheadrightarrow S(n,m-1)$, where $S(n,m-1)$ is the invariant ring of $(m-1)$-tuples of $n \times n$ matrices under the conjugation action of $\operatorname{GL}_n$. He then uses results of \cite{Teranishi} on $S(3,2)$ to compute the multi-graded Hilbert series for $R(3,3)$, by explicitly finding primary and secondary invariants. He gets

$$\mathbb{H}(R(3,3),t_1,t_2,t_3) = \frac{1 + t_1^3t_2^3t_3^3}{(\prod_{i}(1-t_i)^3)( \prod_{i \neq j} (1-t_i^2t_j)) (1-t_1t_2t_3) (1 - t_1^2t_2^2t_3^2)}. $$ \\

 On specializing, we get the Hilbert series 

$$H(R(3,3),t) = \frac{1 + t^9}{(1-t^3)^{10} (1-t^6)} . $$ 

 From this he deduces that $\beta(R(3,3)) \leq 9$. Further this set of primary and secondary invariants turns out to be a minimal generating set, and he is able to argue that $\beta(R(3,3)) = 9$.  
 \end{itemize}

The best known general bound is $\beta(R(n,m)) \leq \beta_U(n) \leq O(n^4 \cdot ((n+1)!)^2)$ (see \cite{IQS},\cite{IQS2}). At this point, it is unclear whether one would expect polynomial bounds (in $n$ and $m$) for $\beta(R(n,m))$.

\subsection{Null Cone}
We introduce the null cone, which is an important tool in computational invariant theory. A good understanding of the null cone could lead to strong degree bounds. 

\begin{definition}{\textbf{Null Cone :}}
For a rational representation $V$ of a linear reductive group $G$, the null cone $\mathcal{N}_V$ is the zero set of all homogeneous invariant polynomials of positive degree
$$\mathcal{N}_V = \{ v \in V | f(v) = 0 \  for\  all \ f \in \C[V]^G_+ \}.$$ 
\end{definition}

\begin{definition}
$\gamma(\C[V]^G)$ is defined as the smallest integer $D$ such the invariants of degree~$\leq~D$ define the null cone $\mathcal{N}_V$.
\end{definition}

One can use bounds on $\gamma(R(n,m))$ to give bounds on $\beta(R(n,m))$ (see \cite[Theorem~4.7.4]{DK}) and consequently bounds on $\beta_U(n)$ as well. In particular, a polynomial bound on $\gamma(R(n,m))$ would give us polynomial bounds for $\beta(R(n,m))$ and $\beta_U(n)$.

\subsection{Organisation and Results} In Section~\ref{n.c}, we exhibit invariants that define the null cone for $R(3,m)$. We have

\begin{proposition} \label{inv.null}
The null cone for $R(3,m)$ is defined by a finite set of invariants of degree $\leq 6$, namely

\begin{itemize}
\item A set of $\leq 9m-16$ degree $3$ invariants that define the same subvariety as the vanishing of all degree $3$ invariants. 
\item The degree $6$ invariants $g_{i,j,k} := det\left( \arraycolsep=4 pt\def\arraystretch{1.4} \begin{array}{c|c}
X_j & X_i  \\
\hline
X_i & X_k \\
 \end{array} \right)$ for $1 \leq i < j < k \leq m$.  \\

\end{itemize}
\end{proposition}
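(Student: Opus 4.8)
The plan is to prove both inclusions between the null cone $\mathcal{N}$ and the common zero locus $Z$ of the listed invariants. Since every $g_{i,j,k}$ and every degree $3$ invariant is a homogeneous invariant of positive degree, all of them vanish on $\mathcal{N}$, so $\mathcal{N} \subseteq Z$ is immediate. All the content lies in the reverse inclusion $Z \subseteq \mathcal{N}$: I must show that if all of the chosen invariants vanish at a tuple $(X_1,\dots,X_m)$, then in fact every invariant vanishes there.

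First I would recast membership in the null cone through the determinantal description of Section~\ref{det.desc}. Because the coefficients of $\det(\chi_k)$ span the degree $kn$ invariants and there are no invariants in degrees not divisible by $n$, a tuple lies in $\mathcal{N}$ if and only if $\det(\chi_k)$ vanishes identically in the indeterminates $t^p_{i,j}$ for every $k$; equivalently, via the Hilbert--Mumford criterion applied to the diagonal one-parameter subgroups of $\SL$, exactly when the matrix space $\langle X_1,\dots,X_m\rangle$ is singular in the non-commutative sense, i.e. admits a shrunk subspace. The crucial structural input for $n=3$ is that such a shrunk subspace is always detected at blow-up size at most $n-1=2$: if $\det(\chi_1)\equiv 0$ and $\det(\chi_2)\equiv 0$, then $\det(\chi_k)\equiv 0$ for all $k$. (Indeed, if some $\det(\chi_k)\not\equiv 0$ then the space has full non-commutative rank, which is already witnessed by $\chi_2$; note that a block-diagonal substitution shows $\det(\chi_2)\not\equiv 0$ whenever $\det(\chi_1)\not\equiv 0$.) This reduces the problem to showing that on $Z$ both $\det(\chi_1)$ and $\det(\chi_2)$ vanish identically, consistent with the degree bound $6=3\cdot 2$.

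For the degree $3$ part, note that $\det(\chi_1)=\det(\sum_p t^p X_p)$ is a cubic form whose coefficients are exactly the degree $3$ invariants, so $\det(\chi_1)\equiv 0$ is precisely the vanishing of all of them. The reduction to only $9m-16$ is a general position statement: the ambient space $\operatorname{Mat}_{3,3}^m$ has dimension $9m$, generic $\C$-linear combinations of degree $3$ invariants are again degree $3$ invariants, and the locus in question is homogeneous, so one may work projectively. By an arithmetic-rank argument for the number of equations needed to cut out this cone set-theoretically, a suitable subfamily of size $\leq 9m-16$ defines the same subvariety as the full collection, the precise constant $16$ emerging from a dimension estimate for that subvariety. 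I expect this bookkeeping step to be essentially routine.

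The heart of the argument, and the step I expect to be the main obstacle, is to show that on the locus where all degree $3$ invariants vanish, the vanishing of the $g_{i,j,k}$ forces $\det(\chi_2)\equiv 0$. Writing the blocks of $\chi_2$ as $L_{11},L_{12},L_{21},L_{22}$, the block-degree of each coefficient of $\det(\chi_2)$ has the form $(3-s,s,s,3-s)$; the cases $s=0,3$ give products $\det(L_{11})\det(L_{22})$ and $\pm\det(L_{12})\det(L_{21})$, which are products of degree $3$ invariants and vanish by the previous step, so only the genuinely mixed coefficients with $s=1,2$ remain. The goal is then an explicit multilinear identity expressing each such coefficient as a $\C$-linear combination of the symmetric-off-diagonal determinants $g_{i,j,k}=\det\left(\begin{smallmatrix} X_j & X_i \\ X_i & X_k\end{smallmatrix}\right)$ together with products of degree $3$ invariants; equivalently, in the Hilbert--Mumford language, one must build a genuine $2$-dimensional shrunk subspace once all $g_{i,j,k}$ vanish. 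Since $g_{i,j,k}$ only samples the block-determinant on the diagonal $L_{12}=L_{21}=X_i$, the delicate point is recovering the full polarization of the mixed coefficients from these symmetric evaluations modulo degree $3$ products, and checking that no further degree $6$ invariants are needed. I would attack this by classifying the singular $3\times 3$ matrix spaces that survive the degree $3$ conditions and verifying case by case that the $g_{i,j,k}$ obstruct exactly the tuples lying outside $\mathcal{N}$.
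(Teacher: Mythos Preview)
Your proposal has a genuine gap and a structural confusion. The ``crucial structural input'' you assert---that for $n=3$, if $\det(\chi_1)\equiv 0$ and $\det(\chi_2)\equiv 0$ then $\det(\chi_k)\equiv 0$ for all $k$---is not justified. Your one-line argument (``full non-commutative rank is already witnessed by $\chi_2$'') is exactly the statement that the invariants of degree $\leq 6$ cut out the null cone; that is the content of the proposition, so invoking it as input is circular. Even granting this reduction, you would then have to show that \emph{every} coefficient of $\det(\chi_2)$ lies in the radical of the ideal generated by the degree $3$ invariants and the finitely many $g_{i,j,k}$. Your ``explicit multilinear identity'' program for this is substantially harder than necessary, and you rightly flag it as the main obstacle.

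The paper bypasses both issues. It quotes Domokos's classification of maximal singular $3\times 3$ matrix spaces into four types $\mathcal{H}_1,\dots,\mathcal{H}_4$, together with his result that a tuple lies in $\mathcal{N}$ if and only if its span is contained in a space equivalent to $\mathcal{H}_1$, $\mathcal{H}_2$, or $\mathcal{H}_3$. Thus if a tuple lies in the degree-$3$ vanishing locus $Z$ but not in $\mathcal{N}$, its span must be equivalent to $\mathcal{H}_4$ (the $3$-dimensional space of skew-symmetric matrices); three of the $X_p$ then span it, and Domokos's computation gives $g_{i,j,k}\neq 0$ for that triple. This is precisely the case analysis you mention in your final sentence---but it is the entire argument, not a sub-step inside your longer reduction, and it needs no prior knowledge that degree $6$ suffices.

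One smaller correction: the bound $9m-16$ on the number of degree $3$ equations does not come from estimating $\dim Z$. It comes from $\dim R(3,m)=9m-16$ (Lemma~\ref{dim ring}, via Kac's formula) together with Noether normalization applied to the subalgebra generated by the degree $3$ invariants: a Noether normalization of that subalgebra has at most $9m-16$ generators, and integrality forces their common zero locus to equal $Z$.
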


We can deduce from the proof of Proposition~\ref{inv.null} that the invariants of degree $6$ are necessary to define the null cone if $m \geq 3$. 

\begin{corollary}\label{gamma}
For $m \geq 3$, we have $\gamma(R(3,m)) = 6$.
\end{corollary}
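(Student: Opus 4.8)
The upper bound is free: Proposition~\ref{inv.null} already exhibits a finite set of invariants of degree $\leq 6$ cutting out the null cone, so $\gamma(R(3,m)) \leq 6$. The whole content of the corollary is therefore the reverse inequality for $m \geq 3$. My plan is to show that the invariants of degree $< 6$ do \emph{not} define the null cone. Since $R(3,m)$ has nonzero invariants only in degrees divisible by $3$ (item (1) of the determinantal description), the invariants of degree $< 6$ are exactly the degree $3$ invariants; so it suffices to produce a single tuple $v = (X_1, \dots, X_m)$ lying in the common zero locus of all degree $3$ invariants but \emph{not} in the null cone $\mathcal{N}$. Any such $v$ witnesses $\gamma(R(3,m)) > 5$, hence $\geq 6$, and combined with the upper bound gives equality (note $\mathcal{N}$ is a proper subvariety, since nonconstant invariants exist, so the degrees $\leq 2$ case is excluded automatically).

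The degree $3$ invariants are spanned by the coefficients of $\det(\chi_1) = \det(t_1 X_1 + \cdots + t_m X_m)$, viewed as a polynomial in $t_1, \dots, t_m$. Thus all degree $3$ invariants vanish at $v$ precisely when every linear combination $\sum_p t_p X_p$ is a singular $3 \times 3$ matrix. The cleanest source of such a configuration is the space of skew-symmetric matrices: for $m = 3$ I would take $X_1, X_2, X_3$ to be the standard basis of skew-symmetric $3 \times 3$ matrices, so that $t_1 X_1 + t_2 X_2 + t_3 X_3$ is a generic skew-symmetric matrix and hence singular (odd size forces the determinant to vanish). This forces all degree $3$ invariants to vanish at $v = (X_1, X_2, X_3)$. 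To see that $v \notin \mathcal{N}$, I would evaluate the degree $6$ invariant
$$ g_{1,2,3} = \det \begin{pmatrix} X_2 & X_1 \\ X_1 & X_3 \end{pmatrix} $$
at this point; a direct $6\times 6$ determinant computation gives $g_{1,2,3} = \pm 1 \neq 0$, so a positive-degree invariant is nonzero at $v$ and $v$ escapes the null cone.

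For general $m \geq 3$ I would simply pad: take the tuple with $X_1, X_2, X_3$ as above in the first three slots and all remaining matrices zero. Then $\det(t_1 X_1 + \cdots + t_m X_m) = \det(t_1 X_1 + t_2 X_2 + t_3 X_3) \equiv 0$, so again every degree $3$ invariant vanishes, while $g_{1,2,3}$ retains the value $\pm 1$, keeping $v$ outside $\mathcal{N}$. This uniform construction should be exactly the configuration extracted from the proof of Proposition~\ref{inv.null} alluded to in the remark preceding the corollary.

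The only genuine work is the verification in the second paragraph: one must exhibit a linear space of singular $3 \times 3$ matrices on which the block invariant $g_{i,j,k}$ does not vanish, i.e.\ confirm that the degree $3$ invariants genuinely fail to detect this point while a degree $6$ invariant succeeds. For the skew-symmetric choice this reduces to a finite, explicit determinant evaluation and poses no conceptual difficulty; the only subtlety is selecting the three matrices so that singularity of \emph{all} linear combinations and nonvanishing of $g_{1,2,3}$ hold simultaneously, which the skew-symmetric basis arranges neatly.
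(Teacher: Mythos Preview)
Your proposal is correct and is essentially the same argument as the paper's: the paper simply refers back to the proof of Proposition~\ref{inv.null}, where the space $\mathcal{H}_4$ of skew-symmetric $3\times 3$ matrices is exactly the singular space lying in $Z$ but outside the null cone, and Domokos' nonvanishing of $g_{1,2,3}$ on any spanning triple of $\mathcal{H}_4$ is precisely the witness you construct explicitly. Your write-up just unpacks that reference (including the padding to $m>3$) and verifies the determinant by hand.
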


This in turn gives us a hsop.

\begin{proposition} \label{hsop}
For $m \geq 3$, there exists a set of $9m - 16$ invariants of degree $6$ that form a hsop for $R(3,m)$. 
\end{proposition}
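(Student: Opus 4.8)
The plan is to use the standard correspondence between a homogeneous system of parameters and the null cone. Write $V = \operatorname{Mat}_{3,3}^m$, $G = \operatorname{SL}_3\times\operatorname{SL}_3$, and $R = R(3,m)$. Recall that homogeneous invariants $f_1,\dots,f_d$ of positive degree, with $d = \dim R$, form a hsop if and only if their common zero locus in $V$ is exactly the null cone $\mathcal{N}_V$; equivalently, regarding the $f_i$ as functions on the affine quotient $V/\!/G = \operatorname{Spec}(R)$, if and only if $\sqrt{(f_1,\dots,f_d)} = R_+$, the irrelevant ideal. Indeed, a hsop makes $R$ finite over $\C[f_1,\dots,f_d]$, so the fiber of $V/\!/G \to \mathbb{A}^d$ over the origin, which pulls back under the quotient map to $\mathcal{N}_V$, is a single point; conversely $d$ homogeneous elements cutting the graded ring down to dimension $0$ are a system of parameters, hence a hsop. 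Since a hsop has exactly $\dim R$ elements, the first task is to verify $\dim R = 9m-16$.

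For the dimension I would compute the generic stabilizer. If $(A,B)$ fixes a generic tuple $(X_1,\dots,X_m)$, then $AX_i = X_iB$ for all $i$; taking $X_1$ invertible (generic) gives $B = X_1^{-1}AX_1$ and forces $A$ to commute with each $M_i := X_iX_1^{-1}$. For $m\geq 3$ and generic $X_i$, the matrix $M_2$ has distinct eigenvalues, so $A$ is diagonal in its eigenbasis, while $M_3$ has nonzero off-diagonal entries in that basis; the relations $(a_i-a_j)(M_3)_{ij}=0$ then force $A$ to be scalar. Intersecting with $\operatorname{SL}_3\times\operatorname{SL}_3$ leaves the finite group $\{(\lambda I,\lambda I):\lambda^3=1\}$. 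Hence the generic orbit has dimension $\dim G = 16$, and $\dim R = \dim V - 16 = 9m-16 =: d$. (For $m=2$ the generic stabilizer is positive-dimensional, which is exactly why the hypothesis $m\geq 3$ is needed.)

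Next I would manufacture the defining invariants in a single degree. By Proposition~\ref{inv.null}, $\mathcal{N}_V$ is the common zero locus of a set of $\le 9m-16$ degree $3$ invariants $f_1,\dots,f_k$ together with the degree $6$ invariants $g_{i,j,k}$. Replacing each $f_\ell$ by its square $f_\ell^2$ does not change its zero set, so $\mathcal{N}_V$ is also the common zero locus of the degree $6$ invariants $f_1^2,\dots,f_k^2,\{g_{i,j,k}\}$. Let $W\subseteq R_6$ be their $\C$-span. By the correspondence above, $\sqrt{(W)} = R_+$. The final step is a graded Noether normalization within degree $6$: because $\sqrt{(W)}=R_+$ and $\dim R = d$, a generic $d$-tuple from $W$ is a hsop. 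Construct $f_1,\dots,f_d\in W$ inductively so that $\dim R/(f_1,\dots,f_i) = d-i$. At stage $i\leq d$ the top-dimensional minimal primes $\mathfrak{p}$ of $(f_1,\dots,f_{i-1})$ are finite in number and none equals $R_+$ (which has dimension $0 < d-i+1$); since $W\subseteq\mathfrak{p}$ would give $R_+ = \sqrt{(W)}\subseteq\mathfrak{p}$, a contradiction, we have $W\not\subseteq\mathfrak{p}$ for each such $\mathfrak{p}$, and homogeneous prime avoidance over $\C$ produces $f_i\in W$ lying in none of them, dropping the dimension by one. After $d$ steps $R/(f_1,\dots,f_d)$ is $0$-dimensional, so $f_1,\dots,f_d$ is a hsop of $9m-16$ invariants of degree $6$.

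The genuine obstacle is not the concluding prime-avoidance argument, which is routine, but correctly assembling the two inputs: the dimension count $\dim R = 9m-16$ (via finiteness of the generic stabilizer, valid only for $m\geq 3$) and the null-cone description of Proposition~\ref{inv.null}. The degree-uniformizing trick of squaring the cubic invariants is what allows all generators to be taken in degree $6$; note that the cubics alone do not cut out the null cone (the $g_{i,j,k}$ are needed for $m\geq 3$), so the space $W$ must genuinely contain both families.
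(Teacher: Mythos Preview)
Your proof is correct and follows the same overall strategy as the paper: use Proposition~\ref{inv.null} to see that the null cone is cut out by invariants of degrees $3$ and $6$, uniformize to degree $6$ (you square the cubics; the paper simply notes that the full space $R(3,m)_6$ already suffices), and then apply graded Noether normalization to extract $\dim R(3,m)$ elements of degree $6$ forming a hsop.

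The one substantive difference is how the Krull dimension $9m-16$ is obtained. The paper quotes Lemma~\ref{dim ring}, which in turn rests on Kac's formula for the dimension of a ring of quiver semi-invariants via the canonical decomposition of the dimension vector. You instead compute the generic stabilizer directly and use $\dim R = \dim V - \dim(\text{generic orbit})$. Your route is more elementary and self-contained for this specific case, avoiding the quiver machinery; the paper's route is shorter given that Lemma~\ref{dim ring} is already established for all $n$. Both are valid, and the remainder of your argument (prime avoidance inside the span $W\subseteq R_6$) is a correct unpacking of the Noether normalization step the paper cites from \cite{DK}.
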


We then use Proposition~\ref{inv.null} and Proposition~\ref{hsop} to find upper bounds for $R(3,m)$ for various $m$ in Section~\ref{u.b}. In particular,
 
 \begin{proposition} \label{g.d}
 The ring $R(3,m)$ is generated by invariants of degree $\leq 309$ for all $m$, i.e, $\beta_U(n) \leq 309$.
 \end{proposition}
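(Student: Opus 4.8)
The plan is to reduce to the single case $m = 9$ and bound the degrees of a free-module basis via the Hilbert series. First I would invoke Weyl's theorem \cite{KP}, giving $\beta(R(3,m)) \le \beta(R(3,9)) = \beta_U(3)$ for every $m$; so it suffices to prove $\beta_U(3) \le 309$. By Proposition~\ref{hsop}, $R(3,9)$ admits a homogeneous system of parameters $\theta_1, \dots, \theta_{65}$ consisting of $9\cdot 9 - 16 = 65$ invariants, each of degree $6$; in particular $\dim R(3,9) = 65$, while $\dim \operatorname{Mat}_{3,3}^9 = 81$.

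Next I would use the Cohen--Macaulay property. By Hochster--Roberts, $R(3,9)$ is free over $A := \C[\theta_1, \dots, \theta_{65}]$ on homogeneous secondary invariants $1 = \eta_0, \eta_1, \dots$ of degrees $0 = e_0, e_1, \dots$, and the $\theta_i$ together with the $\eta_j$ generate $R(3,9)$ as a $\C$-algebra. Hence $\beta_U(3) \le \max(6, \max_j e_j)$. Writing
$$H(R(3,9),t) = \frac{P(t)}{(1-t^6)^{65}}, \qquad P(t) = \sum_j t^{e_j},$$
the object to control is $\deg P = \max_j e_j$. As the denominator has degree $6 \cdot 65 = 390$, the $a$-invariant satisfies $a(R(3,9)) = \deg P - 390$, i.e.\ $\deg P = a(R(3,9)) + 390$.

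The crucial input is an upper bound for the $a$-invariant. Here I would appeal to Knop's theorem on the canonical module of an invariant ring: since $\SL$ is connected semisimple and its image in $\operatorname{GL}(\operatorname{Mat}_{3,3}^9)$ lies in $\operatorname{SL}(\operatorname{Mat}_{3,3}^9)$ (a semisimple group has no nontrivial characters, so the action carries no determinantal twist), one gets $a(R(3,9)) \le -\dim \operatorname{Mat}_{3,3}^9 = -81$. Substituting yields $\deg P \le 390 - 81 = 309$, and therefore $\beta_U(3) \le \max(6, 309) = 309$. As a consistency check, for $m = 3$ the series $H(R(3,3),t) = (1+t^9)/((1-t^3)^{10}(1-t^6))$ has $a$-invariant $9 - 36 = -27 = -\dim\operatorname{Mat}_{3,3}^3$, so the analogous bound is sharp and $R(3,3)$ is in fact Gorenstein.

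I expect the main obstacle to be justifying the $a$-invariant bound $a(R(3,9)) \le -\dim V$ in our setting: one must check the hypotheses of Knop's theorem for the left--right action and correctly track the canonical-module degree shift. The remaining steps---the reduction via Weyl, the Cohen--Macaulay free decomposition, and the passage from the $a$-invariant to the top secondary degree---are formal once Proposition~\ref{hsop} furnishes a degree-$6$ hsop. A coarser alternative would be the null-cone-to-generators estimate \cite[Theorem~4.7.4]{DK} applied to this hsop, but tracking its constants seems less likely to produce the exact value $309$ than the $a$-invariant computation, which (via the $m=3$ check) appears to be sharp.
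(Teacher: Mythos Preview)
Your overall strategy coincides with the paper's: the paper packages exactly this Cohen--Macaulay/Hilbert-series argument into Proposition~\ref{deg.bds}, and applying it to the degree-$6$ hsop of Proposition~\ref{hsop} at $m=9$ yields $6\cdot 65 - 65 - (2\cdot 9 - 2) = 309$, after which Weyl's theorem gives the bound for all $m$. Your arithmetic and the reduction steps are correct.

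There is, however, a genuine gap in your justification of the crucial inequality $a(R(3,9)) \le -\dim V$. The hypothesis ``$G$ connected semisimple with image contained in $\operatorname{SL}(V)$'' does \emph{not} by itself force $\deg H \le -\dim V$. A counterexample: $\operatorname{SL}_2$ acting diagonally on $V=\C^2\oplus\C^2$ has image in $\operatorname{SL}_4$, yet $\C[V]^{\operatorname{SL}_2}=\C[\det]$ and $\deg H = -2 > -4 = -\dim V$. So the ``no determinantal twist'' reasoning is insufficient; Knop's canonical-module theorem (your \cite{Knop1}, the paper's Theorem~\ref{Knop1}) only gives $\deg H \le -r = -65$, which would produce $325$, not $309$. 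What the paper actually invokes is Knop's \emph{other} result (Theorem~\ref{Knop2}): $\deg H(\C[V]^G,t) = -\dim V$ holds precisely when the locus $Z=\{v\in V:\dim G_v>0\}$ has codimension $\ge 2$. This codimension condition for the left--right action on $\operatorname{Mat}_{n,n}^m$ (for $m\ge 3$, $n\ge 2$) was checked by Domokos in \cite{Dom00a}, and that is the missing input. Once you substitute this for your stated justification, your proof is complete and is essentially identical to the paper's; your $m=3$ sanity check is then an instance of the equality in Theorem~\ref{Knop2} rather than independent evidence for a Watanabe-type bound.
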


We introduce a combinatorial description of the invariants in Section~\ref{comb}. This description helps us get a formula to compute the dimensions of the graded pieces of $R(n,m)$.

\begin{lemma} \label{d.f} The dimenson of $R(n,m)_{kn}$ is given by the computable formula
$$\dim (R(n,m)_{kn} )= \sum\limits_{\lambda \vdash kn} a_{k^n,k^n,\lambda} (\dim\  S_{\lambda}(\C^m)).$$
\end{lemma}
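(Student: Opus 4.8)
The plan is to compute $\dim R(n,m)_{kn}$ by decomposing the ambient polynomial ring as a representation of $\operatorname{GL}_n \times \operatorname{GL}_n \times \operatorname{GL}_m$ and then extracting the $\SL$-invariants. First I would record the representation-theoretic structure of the space: writing $\operatorname{Mat}_{n,n}^m \cong \C^n \otimes (\C^n)^* \otimes \C^m$, the group $\operatorname{GL}_n \times \operatorname{GL}_n \times \operatorname{GL}_m$ acts with the first $\operatorname{GL}_n$ on $\C^n$ (left multiplication by $A$), the second on $(\C^n)^*$ (right multiplication by $B^{-1}$), and $\operatorname{GL}_m$ mixing the $m$ matrices. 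The subgroup $\SL$ acts through the first two factors, while the $\operatorname{GL}_m$-action records the multigrading. The degree-$d$ piece of $\C[\operatorname{Mat}_{n,n}^m]$ is $\operatorname{Sym}^d(W^*)$ with $W = \operatorname{Mat}_{n,n}^m$.

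The main input is the decomposition of the symmetric power of a triple tensor product, which follows from Schur--Weyl duality (equivalently, a three-factor Cauchy formula):
$$\operatorname{Sym}^d(A \otimes B \otimes C) = \bigoplus_{\mu,\nu,\lambda \vdash d} a_{\mu,\nu,\lambda}\, S_\mu(A)\otimes S_\nu(B) \otimes S_\lambda(C),$$
where $a_{\mu,\nu,\lambda}$ is the Kronecker coefficient, i.e.\ the dimension of $([\mu]\otimes[\nu]\otimes[\lambda])^{S_d}$, obtained by taking diagonal $S_d$-invariants in $A^{\otimes d}\otimes B^{\otimes d}\otimes C^{\otimes d}$ after applying Schur--Weyl to each factor. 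Applying this with $A,B,C$ the three tensor factors of $W^*$ decomposes $\C[\operatorname{Mat}_{n,n}^m]_d$ as a $\operatorname{GL}_n\times \operatorname{GL}_n \times \operatorname{GL}_m$-module.

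Next I would extract the $\SL$-invariants. The key fact is that $S_\mu(\C^n)^{\operatorname{SL}_n} \neq 0$ if and only if $\mu$ is a rectangle with exactly $n$ rows, i.e.\ $\mu = k^n$ for some $k$, in which case $S_{k^n}(\C^n) = (\det)^{\otimes k}$ is one-dimensional and $\operatorname{SL}_n$-trivial; otherwise the invariant space vanishes. Imposing this on both $\operatorname{GL}_n$-factors forces $\mu = \nu = k^n$, which in turn forces $d = kn$, recovering that invariants occur only in degrees divisible by $n$. Each rectangular factor contributes a one-dimensional invariant space, so
$$\big(\C[\operatorname{Mat}_{n,n}^m]_{kn}\big)^{\SL} \cong \bigoplus_{\lambda \vdash kn} a_{k^n,k^n,\lambda}\, S_\lambda(\C^m)$$
as $\operatorname{GL}_m$-modules. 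Taking dimensions, and using $\dim S_\lambda(\C^m) = \dim S_\lambda((\C^m)^*)$ to absorb the dualization, yields the stated formula. Computability is then immediate, since the Kronecker coefficients $a_{k^n,k^n,\lambda}$ and the Schur dimensions $\dim S_\lambda(\C^m)$ (via the hook-content formula) are both explicitly computable.

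The remaining points require care rather than genuine difficulty. The two things to verify are (i) the bookkeeping of dual spaces, tracking whether one lands on $S_\lambda(\C^m)$ or $S_\lambda((\C^m)^*)$ and invoking equality of dimensions, together with the analogous harmless transpose ambiguities in the $\operatorname{GL}_n$-factors; and (ii) that the $S_d$-action on $W^{*\otimes d}$ is diagonal across the three tensor factors, which is precisely what produces a single Kronecker coefficient rather than a product of independent multiplicities. Neither step is hard, but stating the triple Cauchy decomposition with the correct Schur functors and correctly identifying the rectangular constraint is the crux of the argument.
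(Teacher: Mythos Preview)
Your proof is correct and follows essentially the same approach as the paper: decompose $\operatorname{Sym}^d$ of the triple tensor product via the Cauchy/Kronecker formula, extract the $\SL$-invariants by forcing both $\operatorname{GL}_n$-factors to rectangular partitions $k^n$, and take dimensions. The only cosmetic difference is that the paper identifies $\operatorname{Mat}_{n,n}^m$ with $\C^n\otimes\C^n\otimes\C^m$ rather than $\C^n\otimes(\C^n)^*\otimes\C^m$ and works with $S_d(W)$ rather than $\operatorname{Sym}^d(W^*)$, so the dual-bookkeeping you flag as point~(i) simply does not arise there.
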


Here $S_{\lambda}$ is the Schur functor corresponding to the partition $\lambda$, and $a_{\lambda,\mu,\nu}$  denote Kronecker coefficients, which are known to be invariant under permutation of $\lambda,\mu$ and $\nu$. In this paper, we use several well known results on Schur functors and symmetric functions, and these can be found in standard books such as \cite{Fulton}, \cite{Mac}, and \cite{Stanley}.

We further analyze this combinatorial description in Section~\ref{l.b} to prove that invariants of degree $< n^2$ cannot generate $R(n,m)$ for $m\geq n^2$, i.e, 

\begin{theorem}\label{t}
Suppose $m \geq n^2$, then $\beta(R(n,m)) \geq n^2$. In particular $\beta_U(n) \geq n^2$. 
\end{theorem}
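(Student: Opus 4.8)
The plan is to exploit the $\operatorname{GL}_m$-action on $R(n,m)$ and to pinpoint a single irreducible $\operatorname{GL}_m$-constituent sitting in degree $n^2$ that cannot be assembled from lower degrees. Since $\operatorname{GL}_m$ acts on $\C^m$ — the space indexing the matrices $X_1,\dots,X_m$ — and this action commutes with $\SL$, the ring $R(n,m)$ is a $\operatorname{GL}_m$-module acting by graded algebra automorphisms, and the combinatorial description of Section~\ref{comb} refines Lemma~\ref{d.f} to a $\operatorname{GL}_m$-module decomposition
\[
R(n,m)_{kn} \cong \bigoplus_{\lambda \vdash kn} S_\lambda(\C^m)^{\oplus a_{k^n,k^n,\lambda}} .
\]
I would single out the column partition $\lambda = (1^{n^2})$, for which $S_{(1^{n^2})}(\C^m) = \bigwedge^{n^2}\C^m$ is nonzero \emph{exactly} when $m \geq n^2$; this is the origin of the hypothesis on $m$.

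\emph{The constituent is present.} First I would compute the Kronecker coefficient $a_{n^n,n^n,(1^{n^2})}$. Using that tensoring an $S_N$-irreducible $\chi^\mu$ with the sign character $\chi^{(1^N)}$ yields $\chi^{\mu'}$, one gets $a_{\mu,\nu,(1^N)} = \langle \chi^\mu\chi^\nu,\chi^{(1^N)}\rangle = \delta_{\nu,\mu'}$. Since the square $(n^n)$ is self-conjugate, $a_{n^n,n^n,(1^{n^2})} = 1$, so $\bigwedge^{n^2}\C^m$ occurs (with multiplicity one) in $R(n,m)_{n^2}$.

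\emph{It is absent from lower degrees.} Because semi-invariants occur only in degrees divisible by $n$, the degree-$n^2$ part of the subalgebra $R'$ generated by invariants of degree $<n^2$ is spanned by products $f_1\cdots f_r$ with $r\geq 2$, each $f_i\in R(n,m)_{a_in}$, $a_i\geq 1$ and $\sum_i a_i = n$ (so automatically each $a_i \leq n-1$). As multiplication is $\operatorname{GL}_m$-equivariant, $R'_{n^2}$ is a quotient of a sum of tensor products $\bigotimes_i R(n,m)_{a_in}$; hence every irreducible constituent of $R'_{n^2}$ appears in some $\bigotimes_i S_{\mu_i}(\C^m)$ with each $S_{\mu_i}(\C^m)$ a constituent of $R(n,m)_{a_in}$. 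By the Littlewood–Richardson rule the column $S_{(1^{n^2})}$ appears in a tensor product $\bigotimes_i S_{\mu_i}$ only if every $\mu_i$ is itself a single column, i.e.\ $\mu_i = (1^{a_in})$. But such a column appears in $R(n,m)_{a_in}$ only if $a_{a_i^n,a_i^n,(1^{a_in})}\neq 0$, and by the identity above this equals $\delta_{(a_i^n),(a_i^n)'}$, which vanishes because $a_i<n$ makes the rectangle $(a_i^n)$ non-square. Thus $\bigwedge^{n^2}\C^m$ does not occur in $R'_{n^2}$.

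Combining the two steps, $R(n,m)_{n^2}$ contains the nonzero $\operatorname{GL}_m$-irreducible $\bigwedge^{n^2}\C^m$ which is not contained in $R'_{n^2}$; hence invariants of degree $<n^2$ do not generate $R(n,m)$ up through degree $n^2$, so $\beta(R(n,m))\geq n^2$, and taking $m=n^2$ gives $\beta_U(n)\geq n^2$. The main obstacle is the second step: one must argue purely at the level of $\operatorname{GL}_m$-modules that multiplication creates no new irreducibles, and then combine the column constraint coming from Littlewood–Richardson with the self-conjugacy criterion for the Kronecker coefficient. The representation-theoretic identity $a_{\mu,\nu,(1^N)} = \delta_{\nu,\mu'}$ is precisely what turns the vague slogan ``only squares produce columns'' into a rigorous separation of degree $n^2$ from all smaller degrees.
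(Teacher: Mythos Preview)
Your proof is correct and is essentially the same as the paper's: both identify the $\operatorname{GL}_m$-irreducible $\bigwedge^{n^2}\C^m$ in degree $n^2$ via the Kronecker identity $a_{k^n,k^n,1^{kn}}=\delta_{k,n}$ (equivalently, tensoring with sign transposes the partition), and then use the Littlewood--Richardson column constraint to rule it out of any product of lower-degree pieces. The only cosmetic difference is that the paper packages the Littlewood--Richardson step as a standalone criterion (Proposition~\ref{qwerty}) and then verifies its hypotheses, whereas you carry out that argument inline.
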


Finally in Section~\ref{hilb}, we use the results in \cite{DW2} to get denominators of low degree for the Hilbert series, which makes computations more feasible, and give explicit computations.

\section{Null cone for $R(3,m)$} \label{n.c}
\subsection{Krull dimension of $R(n,m)$}
There is a formula for the dimension of the ring of semi-invariants of a quiver for a given dimension vector in terms of the canonical decomposition of the dimension vector (see \cite[Proposition 4] {Kac2}). In the case of the $m$-Kronecker quiver, the canonical decomposition of the dimension vector $(n,n)$ is the following :

\begin{itemize}
\item \textbf{$m = 1,2:$} The canonical decomposition is $(n,n) = (1,1)^{\oplus n}$;
\item \textbf {$m\geq 3:$} $(n,n)$ is an imaginary Schur root and its canonical decomposition is trivial, i.e, $(n,n) = (n,n).$
\end{itemize}

The cases for $m=1,2$ have already been dealt with, so we only apply Kac's formula for $m \geq 3$ to get the following lemma.
\begin{lemma} \label{dim ring}
If $m \geq 3$, then we have $\dim R(n,m) = mn^2 - 2(n^2 -1).$ 
\end{lemma}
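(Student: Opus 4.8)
The plan is to read off $\dim R(n,m)$ as the dimension of the GIT quotient $\operatorname{Mat}_{n,n}^m /\!/ (\SL)$, using the standard fact that for a linearly reductive group $G$ acting on an irreducible affine variety $V$ one has
$$\dim \C[V]^G = \dim V - \max_{v \in V} \dim(G\cdot v),$$
the maximum being attained on a dense open subset (the generic orbit), since the quotient map is dominant with generic fibre dimension equal to the generic orbit dimension. As $\SL$ is semisimple and we work over $\C$, it is linearly reductive and this applies. Here $V = \operatorname{Mat}_{n,n}^m$ is a vector space, hence irreducible, with $\dim V = mn^2$, and $\dim(\SL) = 2(n^2-1)$. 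Writing the generic orbit dimension as $\dim(\SL) - s$, where $s$ is the dimension of the stabilizer of a generic point, the whole problem reduces to showing that $s = 0$, i.e. that a generic $m$-tuple has finite stabilizer.

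For the stabilizer I would pass to the quiver picture already set up above. The stabilizer in $\operatorname{GL}_n \times \operatorname{GL}_n$ of a tuple $(X_1,\dots,X_m)$ consists of the $(A,B)$ with $A X_i = X_i B$ for all $i$, which is exactly the automorphism group $\operatorname{Aut}(M)$ of the corresponding representation $M$ of the $m$-Kronecker quiver of dimension vector $(n,n)$. For $m \geq 3$ the dimension vector $(n,n)$ is a Schur root, as recorded in the canonical decomposition preceding the lemma, so a generic $M$ is a brick: $\operatorname{End}(M) = \C$, and hence $\operatorname{Aut}(M) = \C^{\times}$, realised inside $\operatorname{GL}_n \times \operatorname{GL}_n$ as the diagonal scalars $(\lambda I_n, \lambda I_n)$. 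Intersecting with $\SL$ forces $\lambda^n = 1$, so the generic $\SL$-stabilizer is the finite group $\mu_n$ of $n$-th roots of unity. Thus $s = 0$.

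Combining the two steps, the generic $\SL$-orbit has dimension $2(n^2-1)$, and therefore
$$\dim R(n,m) = mn^2 - 2(n^2-1),$$
as claimed. The only genuine content is the finiteness of the generic stabilizer, i.e. the Schur-root/brick property for $m \geq 3$; this is precisely the input furnished by Kac's theory through the triviality of the canonical decomposition quoted before the lemma, and it is the step I expect to be the crux (everything else is a transparent dimension count). As a consistency check, for $n=m=3$ the formula gives $\dim R(3,3) = 11$, matching the number of factors in the denominator of Domokos's Hilbert series recorded in the introduction.
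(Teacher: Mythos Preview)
Your argument is correct. The one caveat is that the dimension formula $\dim \C[V]^G = \dim V - \max_v \dim(G\cdot v)$ is not valid for arbitrary linearly reductive $G$ on irreducible affine $V$; what makes it hold here is that $G=\SL$ is connected semisimple and $V$ is a vector space, so every $G$-invariant rational function is a ratio of $G$-invariant polynomials (no nontrivial characters, factorial coordinate ring with trivial units), whence $\dim \C[V]^G=\operatorname{trdeg}\C(V)^G$ and Rosenlicht applies. You effectively use this, so the logic is sound.

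Compared with the paper: the paper simply invokes Kac's dimension formula \cite[Proposition~4]{Kac2} for rings of quiver semi-invariants, feeding in the canonical decomposition $(n,n)=(n,n)$ for $m\geq 3$. Your proof is an explicit unpacking of that formula in this special case: the crucial input---that $(n,n)$ is a Schur root, hence a generic representation is a brick---is identical, and from it you derive the generic-stabilizer finiteness and the orbit-dimension count by hand rather than citing Kac. The upshot is the same; your route is more self-contained, while the paper's is a one-line appeal to a general result.
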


\subsection{Invariants defining the null cone}
Proposition~\ref{inv.null} gives a finite set of invariants that define the null cone. We rely heavily on the results in \cite{Dom00b} for proving it.
\begin{proof} [Proof of Proposition~\ref{inv.null}]
Let $Z$ denote the vanishing set of all the degree $3$ invariants. Note that the dimension of $R(3,m)$ is $9m -16$, by Lemma~\ref{dim ring}. Hence, there is a set of $\leq 9m-16$  degree $3$ invariants that defines $Z$, by the Noether normalization lemma (see \cite[Lemma~2.4.7]{DK}).

In \cite{Dom00b}, Domokos analyses the maximal singular matrix spaces in order to compute a hsop for $R(3,3)$. We quickly summarize the results which we'll use. 

A singular matrix space is a linear subspace of the space of matrices which does not contain an invertible matrix. The $m$-tuples in $Z$ are precisely the $m$-tuples whose span is a singular matrix space, by the determinantal description in Section~\ref{det.desc}.

In \cite{Dom00b}, Domokos classifies the maximal singular $3 \times 3$ spaces as being equivalent to one of 4 types, which are denoted $\mathcal{H}_i, i= 1,2,3,4$ (see \cite[Proposition~2.2]{Dom00b}).  A triple of matrices belongs to the null cone if and only if it belongs to a maximal singular space of type  $\mathcal{H}_1, \mathcal{H}_2$, or $\mathcal{H}_3$, by \cite[Proposition~3.2]{Dom00b}. The same proof goes through for an $m$-tuple of matrices for any $m \geq 3$. Domokos remarks after \cite[Proposition~2.2]{Dom00b} that any $2$-dimensional singular space is contained in $\mathcal{H}_1,\mathcal{H}_2$, or $\mathcal{H}_3$. $\mathcal{H}_4$ is the space of skew-symmetric matrices, and in particular is a $3$-dimensional space.  In \cite{Dom00b}, Domokos shows that the invariant $\det\left(  \arraycolsep=4pt\def\arraystretch{1.4}\begin{array}{c|c}
X_2 & X_1  \\
\hline
X_1 & X_3 \\
 \end{array} \right)$
does not vanish on a triple of matrices $(X_1,X_2,X_3)$ if the span of the triple is equivalent to $\mathcal{H}_4$.

 Suppose an $m$-tuple $(X_1,X_2,\dots,X_m)$ is in $Z$, but not in the null cone, then the span of the $m$-tuple is equivalent to $\mathcal{H}_4$, and hence $3$-dimensional. Hence, there exist 3 matrices $X_i,X_j,X_k$ which span the space. Hence  $g_{i,j,k}$ is an invariant that does not vanish on the given $m$-tuple. 
\end{proof}
 
\begin{proof}[Proof of Corollary~\ref{gamma}]
By Proposition~\ref{inv.null}, the invariants of degree $\leq 6$ define the null cone. We observe from the proof of Proposition~\ref{inv.null}, that the degree $3$ invariants are not sufficient to define the null cone if $m\geq 3$.
 \end{proof}
  
 \begin{remark}
 The set of invariants in Proposition~\ref{inv.null} forms a hsop for $m = 3$, but not for $m \geq 4$ as the number of invariants is larger than the dimension of the ring. 
 \end{remark} 

\subsection{A hsop for $R(3,m), m\geq 3$}

\begin{proof}[Proof of Proposition~\ref{hsop}]
Recall that $\dim(R(3,m)) = 9m -16$. Since invariants of degree $3$ and degree $6$ define the null cone, it is clear that just the set of  invariants of degree $6$ define the null cone. By the Noether normalization lemma (see \cite[Lemma~2.4.7]{DK}), we conclude that there exists $9m - 16$ degree $6$ invariants that form a hsop.
\end{proof}

\section{Upper bounds for $\beta(R(3,m))$} \label{u.b}

We want to bound the degrees of primary and secondary generators, in order to obtain upper bounds on $\beta(R(3,m))$. The following result of Knop is very useful in that regard.

\begin{theorem} [\cite{Knop1}] \label{Knop1}
Let $V$ be a rational representation of a semisimple connected group $G$. Let $r$ be the Krull dimension of $\C[V]^G$, then 

$$\deg (H(\C[V]^G)) \leq -r.$$

\end{theorem}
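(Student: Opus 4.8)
The plan is to reinterpret $\deg H(\C[V]^G,t)$ as the $a$-invariant of $S := \C[V]^G$ and to control it through the canonical module, the point being that semisimplicity of $G$ removes all character twists. First I would record that $S$ is Cohen--Macaulay by the Hochster--Roberts theorem, and that for a finitely generated graded $\C$-algebra $S = \bigoplus_{d \ge 0} S_d$ with $S_0 = \C$ and Krull dimension $r$, the degree of the Hilbert series as a rational function equals the $a$-invariant $a(S) = \max\{\, n : H^r_{\mathfrak m}(S)_n \neq 0 \,\}$, where $\mathfrak m = S_+$ and $H^r_{\mathfrak m}$ is the top local cohomology (Goto--Watanabe). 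By graded local duality, $H^r_{\mathfrak m}(S)$ is the graded Matlis dual of the canonical module $\omega_S$, so $a(S) = -b(S)$ with $b(S) := \min\{\, n : (\omega_S)_n \neq 0 \,\}$. Thus the theorem is equivalent to the statement $b(S) \ge r$: the canonical module of $S$ has no nonzero component in degrees below $r$.

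Next I would identify $\omega_S$ using the geometry of the quotient map. Write $R := \C[V] = \operatorname{Sym}(V^*)$, a polynomial ring in $N := \dim V$ variables of degree $1$, with $\omega_R \cong R(-N)$ generated by the top form in degree $N$. Since $G$ is connected and semisimple it has no nontrivial rational characters, so the composite $G \to \operatorname{GL}(V) \xrightarrow{\det} \C^\times$ is trivial and $G \subseteq \operatorname{SL}(V)$; consequently the $G$-action on $\omega_R = \wedge^N V^* \otimes_\C R$ carries no character twist. Following Knop, the canonical module of the invariant ring is then the module of $G$-invariant relative top-forms for the quotient $\pi \colon V \to \operatorname{Spec}(S)$, i.e. $\omega_S$ is realized, via the Reynolds operator and push-forward along $\pi$, as invariants inside $\omega_R$ descending to $S$; semisimplicity guarantees that the fibrewise invariant volume forms exist and are untwisted.

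The crux is the degree bound $b(S) \ge r$. The generic fibre of $\pi$ is a union of closed $G$-orbits of dimension $q := N - r$. Realizing a degree-$n$ element of $\omega_S$ as a $G$-invariant relative form and restricting it to a generic fibre, the degree can drop from the generator degree $N$ of $\omega_R$ by at most the fibre dimension $q$, because each fibre direction contributes at least one to the grading; hence $n \ge N - q = r$, giving $b(S) \ge r$ and therefore $a(S) \le -r$. The main obstacle is making this last step rigorous: precisely identifying $\omega_S$ with the push-forward of invariant relative forms and carrying out the grading bookkeeping so that the bound is genuinely $\ge N - q$, while checking that semisimplicity (and not merely reductivity) is what forces the correction term to be nonnegative --- for a reductive $G$ with nontrivial characters the invariant volume form need not exist untwisted and the bound shifts accordingly.
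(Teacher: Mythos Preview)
The paper does not prove this theorem; it is quoted from Knop's paper \cite{Knop1} and used as a black box, so there is no proof in the paper to compare against. Your outline is in fact a sketch of Knop's original argument: interpret $\deg H(S,t)$ as the $a$-invariant, pass via graded local duality to the canonical module, and use Knop's identification of $\omega_S$ with a module of invariant relative top-forms to bound its minimal degree from below by $r$.

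Where your sketch is weakest is exactly where you flag it. The sentence ``the degree can drop from the generator degree $N$ of $\omega_R$ by at most the fibre dimension $q$, because each fibre direction contributes at least one to the grading'' is not an argument; it is a heuristic. In Knop's proof the bound $b(S)\ge r$ comes from a precise description of $\omega_S$ as the module of $G$-invariant sections of $\bigwedge^r \Omega_{V/\!\!/G}$ (relative K\"ahler differentials for the quotient), together with the fact that these sit inside degree $\ge r$ of the polynomial ring because each of the $r$ wedge factors carries degree at least $1$. Making this work requires (i) showing the quotient map is generically smooth so the relative differentials behave, (ii) proving the identification $\omega_S \cong \pi_*(\omega_{V/S})^G$ holds in the graded sense (this is where Cohen--Macaulayness and the Reynolds operator enter), and (iii) checking that semisimplicity kills the determinant character so no negative shift appears. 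Your write-up gestures at all three but does not carry any of them out; in particular, the restriction-to-a-fibre argument as stated does not by itself give a grading inequality, since restriction is not degree-preserving in any obvious sense. If you want to complete the proof you should follow \cite{Knop1} directly rather than try to reconstruct it from the slogan.
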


In \cite{DK}, this is used to get the following result.

\begin{proposition}[\cite{DK}]\label{abc}
Let $V$ and $G$ be as in the Theorem~\ref{Knop1}. Suppose $f_1,f_2,\dots,f_l \in \C[V]^G$ are homogeneous invariants that define the null cone. Let $d_i = \deg f_i$. Then $$\beta(\C[V]^G) \leq \max \{d_1,d_2,\dots,d_l,d_1 + d_2 + \dots +d_l - l\}.$$

\end{proposition}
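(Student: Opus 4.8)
The plan is to bound the top degree of a minimal set of module generators of $R := \C[V]^G$ over the subalgebra $A := \C[f_1,\dots,f_l]$ and then assemble the algebra generators. Because $f_1,\dots,f_l$ define the null cone, the only point of $\operatorname{Spec} R$ at which they all vanish is the irrelevant maximal ideal $R_+$, so $\sqrt{(f_1,\dots,f_l)R}=R_+$; hence $R/(f_1,\dots,f_l)R$ is a finite-dimensional graded $\C$-algebra and graded Nakayama shows that $R$ is a finitely generated $A$-module. As an algebra, $R$ is then generated by $f_1,\dots,f_l$ together with any homogeneous $A$-module generators, so $\beta(R)\le \max\{d_1,\dots,d_l,D\}$, where $D$ is the top degree of a minimal set of module generators. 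Everything reduces to showing $D \le \sum_i d_i - l$.

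To control $D$ I would pass to the polynomial ring $P=\C[y_1,\dots,y_l]$ with $\deg y_i=d_i$ mapping onto $A$, so that $R$ becomes a finitely generated graded $P$-module, and consider a minimal graded free resolution $0\to F_c\to\cdots\to F_0\to R\to 0$. The sought degree $D=\max_k b_{0,k}$ is the largest twist occurring in $F_0$. The key structural input is that $R=\C[V]^G$ is Cohen--Macaulay by the Hochster--Roberts theorem; writing $r$ for its Krull dimension, this forces $\operatorname{depth}_P R=r$, whence (Auslander--Buchsbaum) the resolution has length exactly $c=l-r$ and $R$ is a Cohen--Macaulay $P$-module of codimension $c$.

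The difficulty is that maximal twists like $\max_k b_{0,k}$ need not behave monotonically along a resolution, so I would not bound $D$ directly; instead I would dualize. Since $P$ is graded Gorenstein with $\omega_P=P(-\sigma)$, $\sigma:=\sum_i d_i$, and $R$ is Cohen--Macaulay of codimension $c$ (hence a perfect $P$-module), applying $\Hom_P(-,P)$, reversing, and twisting by $-\sigma$ converts the minimal resolution of $R$ into a minimal resolution of the canonical module $\omega_R$, whose top free module is $\bigoplus_k P(b_{0,k}-\sigma)$; thus the least degree appearing there is exactly $\sigma-D$. Two facts then finish the argument. First, by graded local duality the least degree in which $\omega_R$ is nonzero equals $-a(R)$, where $a(R)=\deg H(R,t)$, and Knop's Theorem~\ref{Knop1} gives $a(R)\le -r$, so $\omega_R$ begins in degree $\ge r$. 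Second, in any minimal graded resolution the least generating degree strictly increases at each step, so after climbing the $c$ steps to the top module one gains at least $c$, giving $\sigma-D\ge r+c=l$. Hence $D\le \sigma-l=\sum_i d_i-l$, which with the reduction above yields $\beta(R)\le\max\{d_1,\dots,d_l,\sum_i d_i-l\}$. The step I would be most careful about is precisely this duality manoeuvre: it is what turns the ill-behaved maximal-generator degree into a controllable minimal-degree statement for $\omega_R$, and it is exactly Cohen--Macaulayness that makes the dualized complex a resolution.
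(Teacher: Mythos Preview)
Your argument is correct. The paper itself does not give a proof of this proposition, merely citing it from \cite{DK}; the line you outline---view $R$ as a module over the weighted polynomial ring $P=\C[y_1,\dots,y_l]$, use Hochster--Roberts and Auslander--Buchsbaum to get $\operatorname{pd}_P R=l-r$, then dualize the minimal resolution to one of $\omega_R$ and feed in Knop's bound $a(R)\le -r$ together with the strict increase of minimal shifts---is precisely the argument in the cited reference (cf.\ \cite[Corollary~4.7.7]{DK}, to which the paper also points when deriving Proposition~\ref{deg.bds}).
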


There is a stronger result by Knop on the degree of the Hilbert series.
\begin{theorem}[\cite{Knop2}] \label{Knop2}
Let $V$ be a rational representation of a semisimple connected group $G$. Let $Z = \{ v \in V | \dim G_v > 0 \}$. Then $$\deg (H(\C[V]^G)) = - \dim V \Longleftrightarrow \operatorname{codim} (Z) \geq 2.$$

\end{theorem}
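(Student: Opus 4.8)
The plan is to identify $\deg H(\C[V]^G)$ with the $a$-invariant of the ring $R := \C[V]^G$ and to compute the latter through its canonical module. By the Hochster--Roberts theorem $R$ is Cohen--Macaulay, so it admits a canonical module $\omega_R$, and for a graded Cohen--Macaulay ring one has $\deg H(R,t) = -\min\{ j : (\omega_R)_j \neq 0 \}$. Thus the assertion $\deg H(\C[V]^G) = -\dim V$ is equivalent to saying that $\omega_R$ is generated in the single degree $N := \dim V$, and the whole problem becomes one of locating the bottom degree of the canonical module.

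The starting point is that $V$ is smooth, so $\omega_{\C[V]} = \wedge^N \Omega_{\C[V]/\C} \cong \C[V](-N)$, a free module whose generator $dx_1 \wedge \dots \wedge dx_N$ sits in degree $N$. Because $G$ is connected and semisimple it has no nontrivial characters, so $G$ acts trivially on the top exterior power $\wedge^N V^*$; hence this generator is $G$-invariant. First I would use this to realize $\omega_R$, up to reflexive hull, as the module of $G$-invariant top differential forms on the quotient $X := V /\!/ G = \operatorname{Spec} R$. Since $R$ is normal and $\omega_R$ is reflexive, $\omega_R$ is determined by its behaviour at the height-one primes of $R$, i.e.\ at the prime divisors of $X$; so it suffices to compute the bottom degree of $\omega_R$ localized at each such divisor.

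Next I would carry out the local analysis at a generic point of a prime divisor $D \subseteq X$ by means of Luna's slice theorem. The deviation of $\omega_R$ from the expected twist $R(-N)$ is a sum of contributions supported on those divisors $D$ over which the quotient map $\pi : V \to X$ ramifies in codimension one, and the ramification along $D$ is governed by the slice representation of the stabilizer of a generic orbit lying over $D$. When that generic stabilizer is finite, the map is generically a quotient by a finite group in codimension one and contributes no shift, so the generator of $\omega_R$ stays in degree $N$. A codimension-one component of $Z$, by contrast, is precisely a divisor over which the generic orbit has a positive-dimensional stabilizer; computing the slice representation of this positive-dimensional stabilizer and its different shows that such a divisor strictly lowers the bottom degree of $\omega_R$, hence strictly raises $\deg H$ above $-N$. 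Assembling these local contributions gives $\deg H(\C[V]^G) = -N$ exactly when no prime divisor of $X$ lies over $Z$, which is the condition $\operatorname{codim}(Z) \geq 2$; both directions of the equivalence then follow. I would use Knop's inequality (Theorem~\ref{Knop1}) and the example of $R(3,3)$, where $\deg H = -27 = -\dim \operatorname{Mat}_{3,3}^3$, as sanity checks on the normalization.

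The hard part will be the codimension-one ramification computation: comparing the canonical modules of $\C[V]$ and of $R$ across the non-flat quotient map, identifying the local contribution of each prime divisor with an invariant-theoretic ``different'' built from the slice representation, and---most delicately---proving that a positive-dimensional generic stabilizer always produces a \emph{strictly} nonzero shift, with no accidental cancellation, so that the equivalence is genuine in both directions rather than merely one.
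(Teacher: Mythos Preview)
The paper does not prove this theorem at all: it is quoted with attribution to Knop~\cite{Knop2} and used as a black box in Section~\ref{u.b}. There is therefore no proof in the paper to compare your proposal against.

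For what it is worth, your outline is broadly in the spirit of Knop's original argument, which does proceed via the canonical module of the invariant ring and a codimension-one analysis of the quotient map. But what you have written is a strategy, not a proof: the substantive work---identifying $\omega_R$ with a module of invariant top forms, carrying out the slice computation along each prime divisor, and above all showing that a positive-dimensional generic stabilizer forces a \emph{strictly} nonzero shift with no cancellation---is exactly the content of Knop's paper, and you have only named these steps rather than performed them. If the intention is simply to cite the result, as the present paper does, then no proof is needed; if the intention is to supply an independent proof, the proposal as it stands defers every genuine difficulty to the ``hard part'' paragraph.
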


In \cite{Dom00a}, the codimension condition was proved for $R(n,m)$ for $m \geq 3$, and $n \geq 2$. Since this stronger result on the degree of the Hilbert series holds, we can get a stronger result by repeating the proof of  Proposition~\ref{abc}  (see the proof of \cite[Corollary~4.7.7]{DK}). Lemma~\ref{dim ring} implies that for $m \geq 3$, the difference between $\dim R(n,m)$ and $\dim \operatorname{Mat}_{n,n}^m$ is $2n^2 - 2$. So, we get

\begin{proposition}\label{deg.bds}
Let $m \geq 3$ and $n\geq 2$. Suppose $f_1,f_2,\dots,f_l \in R(n,m)$ are homogeneous invariants that define the null cone. Let $d_i = \deg f_i$. Then

$$\beta(R(n,m)) \leq \max \{d_1,d_2,\dots,d_l,d_1 + d_2 + \dots +d_l - l - 2n^2 + 2)\}.$$
\end{proposition}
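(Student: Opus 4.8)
The plan is to retrace the proof of Proposition~\ref{abc} (equivalently of \cite[Corollary~4.7.7]{DK}), isolating the single place where the Krull dimension enters, and then to replace the input there by the sharp value for the degree of the Hilbert series supplied by Theorem~\ref{Knop2}. Throughout I write $R := R(n,m)$, $V := \operatorname{Mat}_{n,n}^m$, and $r := \dim R$.

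First I would reduce the problem to bounding the degrees of module generators. Since $f_1,\dots,f_l$ define the null cone, the ideal they generate in $R$ has radical $R_+$, so $\bar R := R/(f_1,\dots,f_l)R$ is a finite-dimensional graded algebra and $R$ is a finitely generated module over $A := \C[f_1,\dots,f_l]$. By graded Nakayama, a homogeneous basis of $\bar R$ lifts to a generating set of $R$ as an $A$-module, and the $f_i$ together with these module generators generate $R$ as a $\C$-algebra. Writing $\delta$ for the top degree in which $\bar R$ is nonzero, we obtain $\beta(R) \le \max\{d_1,\dots,d_l,\delta\}$, so it remains to bound $\delta$.

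Next I would bound $\delta$ through the Hilbert series. Writing $H(R,t) = N(t)/\prod_{i=1}^l(1-t^{d_i})$ and comparing degrees of rational functions gives $\deg N = \deg H(R,t) + \sum_i d_i$. The structural input is that $R$ is Cohen-Macaulay (Hochster-Roberts) of Krull dimension $r$, so the sequence $f_1,\dots,f_l$ has grade $r$ and its Koszul homology vanishes in homological degrees exceeding $l-r$. Tracking degrees through the Koszul (Euler-characteristic) computation of $N(t)$, exactly as in \cite[Corollary~4.7.7]{DK}, yields $\delta \le \sum_{i=1}^l(d_i-1) + \deg H(R,t) + r$. Feeding Knop's inequality $\deg H(R,t)\le -r$ from Theorem~\ref{Knop1} into this estimate recovers Proposition~\ref{abc}; the essential observation is that the Krull dimension enters only additively, through the single term $\deg H(R,t)+r$.

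Finally I would upgrade that input. The group $\SL$ is semisimple and connected and $V$ is a rational representation, so Theorem~\ref{Knop2} is available, and Domokos \cite{Dom00a} verified $\operatorname{codim} Z \ge 2$ for $m\ge 3$ and $n\ge 2$; hence $\deg H(R,t) = -\dim V = -mn^2$ exactly. Substituting this into $\delta \le \sum_i(d_i-1)+\deg H(R,t)+r$ replaces $\deg H(R,t)+r$ by $-\dim V + r = -(2n^2-2)$, the last equality being Lemma~\ref{dim ring}, and gives $\delta \le \sum_i d_i - l - 2n^2 + 2$, which is the claimed bound. The main obstacle is the third step: confirming that the degree of the Hilbert series is genuinely the only dimension-dependent quantity in the Derksen-Kemper estimate and that it enters additively, so that trading Knop's inequality for the exact value of Theorem~\ref{Knop2} improves the bound by precisely $\dim V - \dim R = 2n^2-2$. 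Verifying the codimension-$2$ hypothesis (imported from \cite{Dom00a}) before invoking Theorem~\ref{Knop2} is the other point requiring care.
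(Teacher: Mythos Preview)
Your proposal is correct and follows essentially the same route as the paper: the paper simply says to repeat the proof of Proposition~\ref{abc} (i.e., \cite[Corollary~4.7.7]{DK}) with the sharper input $\deg H(R,t) = -\dim V$ from Theorem~\ref{Knop2} (justified via Domokos's codimension-$2$ result in \cite{Dom00a}), and then uses Lemma~\ref{dim ring} to identify $\dim V - \dim R(n,m) = 2n^2 - 2$. Your write-up makes explicit the mechanism by which $\deg H(R,t)+r$ is the sole dimension-dependent term in the Derksen--Kemper estimate, which is exactly the point the paper leaves implicit.
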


For computing upper bounds for $\beta(R(3,m))$, we can apply Proposition~\ref{deg.bds} to the set of invariants defining the null cone given by either Proposition~\ref{inv.null} or Proposition~\ref{hsop}. For $m \leq 3$, tight upper bounds have already been computed. For $4 \leq m \leq 6$, Proposition~\ref{inv.null} gives better bounds, whereas for $m \geq 7$, Proposition~\ref{hsop} gives better bounds. So we get the following table.  

$$ \arraycolsep= 4 pt\def\arraystretch{1.4}\begin{array}{c|c}
m & \text{Upper bound for }  \beta(R(3,m)) \\
\hline
1 & 3 \\
2 & 3 \\
3 & 9 \\
4 & 44 \\
5 & 92 \\
6 & 160 \\
7 & 219 \\
8 & 264 \\
9 & 309 \\ 
 \end{array} $$
 
\begin{proof}[Proof of Proposition~\ref{g.d}]
As remarked in the introduction, a theorem of Weyl (see \cite[Section~7.1,Theorem~A]{KP}) gives us $\beta(R(3,m)) \leq \beta(R(3,9)) \leq 309$.
\end{proof}

\section{Combinatorial description of $R(n,m)$} \label{comb}
In this section, we introduce a combinatorial description of the invariants. This description has been studied before (see \cite{ANS} and \cite{AS}). We write $\lambda \vdash d$ to denote that $\lambda$ is a partition of $d$. We denote by $S_{\lambda}$, the Schur functor corresponding to the partition $\lambda$. We identify $\operatorname{Mat}_{n,n}$ with $\C^n \otimes \C^n$, and consequently identify $\operatorname{Mat}_{n,n}^{m}$ with $\C^n \otimes \C^n \otimes \C^m$. Thus 
$$\C[\operatorname{Mat}_{n,n}^{m}] = \C[\C^n \otimes \C^n \otimes \C^m]  = \bigoplus\limits_{d = 0}^{\infty} S_d(\C^n \otimes \C^n \otimes \C^m).$$ 

Let $\lambda \vdash d$. We have the decomposition 
\begin{equation*}
S_{\lambda}(V \otimes W) = \bigoplus\limits_{ \mu, \nu \vdash d} (S_{\mu}(V) \otimes S_{\nu}(W))^{a_{\lambda,\mu,\nu}},
\end{equation*}

where $a_{\lambda,\mu,\nu}$ are known as the Kronecker coefficients. A particular case is the Cauchy formula,  

\begin{equation*}
S_d(V \otimes W) = \bigoplus\limits_{\lambda \vdash d} S_{\lambda}(V) \otimes S_{\lambda}(W).
\end{equation*}

 Applying the above two decompositions, we get

\begin{align*}
S_d (V \otimes W \otimes Z) &=  \bigoplus\limits_{\lambda \vdash d} S_{\lambda}(V \otimes W) \otimes S_{\lambda}(Z) \\ 
&= \bigoplus\limits_{\lambda \vdash  d} \left( \bigoplus\limits_{\mu, \nu \vdash d} (S_{\mu}(V) \otimes S_{\nu}(W) )^{a_{\lambda,\mu,\nu}}\right) \otimes S_{\lambda}(Z)\\
& = \bigoplus\limits_{\lambda,\mu,\nu \vdash d} (S_{\mu}(V) \otimes S_{\nu}(W) \otimes S_{\lambda}(Z))^{a_{\lambda,\mu,\nu}}.
\end{align*} 
 
This shows in particular that the Kronecker coefficients are invariant under permutating $\lambda,\mu$ and $\nu$. The above is essentially a decomposition of $S_d (V \otimes W \otimes Z)$ as a direct sum of irreducible representations of $\operatorname{GL}(V) \times \operatorname{GL}(W) \times \operatorname{GL}(Z)$. 

\begin{proposition}\label{Schur:descr}
The invariants of $R(n,m)$ have the following description.
\begin{enumerate} 
\item $R(n,m)_d = 0$ if $n \nmid d$.
\item$R(n,m)_{kn} = S_{k^n}(\C^n) \otimes S_{k^n}(\C^n) \otimes \left(\bigoplus\limits_{\lambda \vdash kn} S_{\lambda}(\C^m)^{a_{k^n,k^n,\lambda}}\right).$ 
\end{enumerate}
\end{proposition}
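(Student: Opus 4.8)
The plan is to extract both parts of Proposition~\ref{Schur:descr} from the decomposition of $S_d(V \otimes W \otimes Z)$ just derived, applied with $V = W = \C^n$ and $Z = \C^m$. The key observation is that the invariants of $R(n,m)$ are precisely the subspace of $\C[\operatorname{Mat}_{n,n}^m]$ on which $\SL$ acts trivially, and since $\operatorname{SL}_n$ is the subgroup of $\operatorname{GL}_n$ where the determinant is trivial, I need to identify which irreducible $\operatorname{GL}(V) \times \operatorname{GL}(W) \times \operatorname{GL}(Z)$ summands $S_{\mu}(\C^n) \otimes S_{\nu}(\C^n) \otimes S_{\lambda}(\C^m)$ restrict to the trivial representation of $\operatorname{SL}_n \times \operatorname{SL}_n$ (with the $\operatorname{GL}(Z) = \operatorname{GL}_m$ factor left acting, since we only quotient by $\SL$).

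First I would recall the standard fact that the irreducible $\operatorname{GL}_n$-representation $S_{\mu}(\C^n)$ restricts to the trivial representation of $\operatorname{SL}_n$ if and only if $\mu$ is a rectangular partition with $n$ rows all of equal length, i.e.\ $\mu = k^n = (k,k,\dots,k)$ for some $k \geq 0$; in this case $S_{k^n}(\C^n)$ is one-dimensional, being the $k$-th power of the determinant representation $\det = S_{1^n}(\C^n)$, on which $\operatorname{SL}_n$ acts trivially. Since the left and right $\operatorname{SL}_n$ actions are independent, the $\SL$-invariant part of the summand $(S_{\mu}(\C^n) \otimes S_{\nu}(\C^n) \otimes S_{\lambda}(\C^m))^{a_{\mu,\nu,\lambda}}$ is nonzero precisely when both $\mu$ and $\nu$ are of the form $k^n$, and for this to occur with $\mu,\nu,\lambda \vdash d$ we need $d = kn$, which immediately forces $n \mid d$ and gives part~(1). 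For part~(2), setting $\mu = \nu = k^n$ and letting $\lambda$ range over partitions of $kn$, the surviving piece is exactly $S_{k^n}(\C^n) \otimes S_{k^n}(\C^n) \otimes \bigoplus_{\lambda \vdash kn} S_{\lambda}(\C^m)^{a_{k^n,k^n,\lambda}}$, as claimed.

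The one point requiring a little care is the compatibility between the $\operatorname{GL}_n$-representation structure used in the Cauchy/Kronecker decomposition and the geometric $\SL$-action on $\operatorname{Mat}_{n,n}^m$ defined in the introduction. Under the identification $\operatorname{Mat}_{n,n} = \C^n \otimes \C^n$, the left-right action $X \mapsto A X B^{-1}$ corresponds to $A$ acting on the first tensor factor and $B^{-1}$ (i.e.\ the dual action of $B$) on the second. Since $S_{k^n}(\C^n)$ is one-dimensional and $\operatorname{SL}_n$ acts trivially on it whether one uses the standard or the dual action, this inverse/transpose twist does not affect which summands are invariant, so the conclusion is unchanged; I would simply note this to justify applying the decomposition directly.

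I expect the main obstacle to be the bookkeeping in the first step rather than anything deep: one must state precisely the criterion that $S_{\mu}(\C^n)|_{\operatorname{SL}_n}$ is trivial iff $\mu = k^n$, and verify that taking invariants commutes with the direct sum so that the Kronecker multiplicities $a_{k^n,k^n,\lambda}$ pass through unchanged. Everything else is a direct reading-off from the displayed decomposition of $S_d(V \otimes W \otimes Z)$, so the proof is short once this restriction criterion is in hand.
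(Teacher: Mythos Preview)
Your proposal is correct and follows essentially the same approach as the paper: identify the $\SL$-invariant summands in the decomposition of $S_d(\C^n\otimes\C^n\otimes\C^m)$ by using the fact that $S_\mu(\C^n)$ is $\operatorname{SL}_n$-trivial iff $\mu=k^n$, forcing $\mu=\nu=k^n$ and hence $n\mid d$, and then read off the description in part~(2). Your extra remark about the $B^{-1}$ twist and the dual action is a careful point the paper leaves implicit, but it does not change the argument.
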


\begin{proof}

We want the polynomials which are invariant under the action of $\SL$.  $\operatorname{SL}_n$ acts trivially on the irreducible representations of $\operatorname{GL}_n$ corresponding to the rectangular partitions of length $n$ (i.e the powers of the determinant representation). On all other irreducible representations, $\operatorname{SL}_n$ acts with no invariants. 

Thus the $\SL$  invariants  of degree $d$ are the summands $(S_{\mu}(\C^n) \otimes S_{\nu}(\C^n) \otimes S_{\lambda}(\C^m))^{a_{\lambda,\mu,\nu}}$ in the decomposition of $S_d(\C^n \otimes \C^n \otimes \C^m)$ where $\mu, \nu$ are rectangular partitions of length $n$, i.e, $\mu = \nu = k^n$ for some $k$. So, in particular, unless $d$ is a multiple of $n$, we cannot have any invariants. This proves (1). For (2), 

 \begin{align*}
R(n,m)_{kn} &=  \bigoplus\limits_{\lambda \vdash kn} (S_{k^n}(\C^n) \otimes S_{k^n}(\C^n) \otimes S_{\lambda}(\C^m))^{a_{k^n,k^n,\lambda}} \\
&= S_{k^n}(\C^n) \otimes S_{k^n}(\C^n) \otimes \left(\bigoplus\limits_{\lambda \vdash kn} S_{\lambda}(\C^m)^{a_{k^n,k^n,\lambda}}\right).
\end{align*}
\end{proof}

\begin{proof} [Proof of Lemma~\ref{d.f}]

Since $S_{k^n}(\C^n)$ is 1-dimensional, as $\operatorname{GL}_m$ representations, we have $$ R(n,m)_{kn} =  \bigoplus\limits_{\lambda \vdash kn} S_{\lambda}(\C^m)^{a_{k^n,k^n,\lambda}}.$$ Hence we get the formula
$$\dim (R(n,m)_{kn} )= \sum_{\lambda \vdash kn} a_{k^n,k^n,\lambda} (\dim\  S_{\lambda}(\C^m)). $$

\end{proof}

\begin{remark}
Let $\lambda,\mu \vdash d$. If $T_{\lambda} \ (resp.\  T_{\mu})$ denotes the irreducible representation of the symmetric group on $d$ letters corresponding to the partition $\lambda\ (resp.\ \mu)$, then $$T_{\lambda} \otimes T_{\mu} = \bigoplus\limits_{\nu} T_{\nu}^{a_{\lambda,\mu,\nu}}.$$

\end{remark}

\begin{example}
$T_{1^n}  \otimes T_{1^n} = T_{n}$. Therefore by Lemma~\ref{d.f}, $$ \dim (R(n,m)_n) = \dim (S_n(\C^m)) =  {m+n-1\choose n}.$$ 
\end{example}

\begin{example}
$T_{2^3}  \otimes T_{2^3} = T_6 + T_{(4,2)} + T_{2^3} + T_{(3,1,1,1)}$. Therefore by Lemma~\ref{d.f},
$$ \dim(R(3,m)_6) = \dim S_{6}(\C^m) + \dim S_{(4,2)}(\C^m) + \dim S_{2^3}(\C^m) + \dim S_{(3,1,1,1)}(\C^m).$$

\end{example}

\section{Lower bounds for $\beta(R(n,m))$} \label{l.b}
Let $R \subset \C[W \otimes V]$ be a $\operatorname{GL}(V)$ stable graded subring. Then each $R_d$ is a finite dimensional $\operatorname{GL}(V)$ representation, and we can decompose it as a direct sum of irreducibles, i.e, $$R_d = \bigoplus\limits_{\lambda \vdash d} (S_{\lambda}(V))^{n_{\lambda}}, n_{\lambda} \in \N.$$ Note here that as $\operatorname{GL}(V)$ representations, the $k^{th}$ exterior power $\bigwedge^k(V)$ is $S_{1^k}(V)$ for all positive integers $k$.

\begin{proposition} \label{qwerty}
Let $R \subset  \C[W \otimes V]$ be a $\operatorname{GL}(V)$ stable graded subring. Assume 
\begin{enumerate} 
\item $\bigwedge^i(V)$ does not occur in the decomposition of $R_i$, for $i = 1,2,...,t-1$ ;
\item  $\bigwedge^t(V)$ occurs in the decomposition of $R_t$ at least once ; 
\item $\dim V \geq t$.
\end{enumerate}
Then $\beta(R) \geq t$.

\end{proposition}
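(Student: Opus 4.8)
The plan is to argue by contradiction using the $\operatorname{GL}(V)$-equivariance of the multiplication map on $R$. Suppose $\beta(R) < t$, so that $R$ is generated by its homogeneous components $R_1, R_2, \dots, R_{t-1}$ of degree $< t$. Then in particular the degree-$t$ piece $R_t$ is spanned by products $f_1 f_2 \cdots f_r$ of homogeneous generators $f_i \in R_{d_i}$ with each $d_i < t$ and $\sum_i d_i = t$ (and $r \geq 2$, since a single generator would have degree exactly $t$, contradicting that all generators have degree $< t$). The key point is that multiplication in $R$ is $\operatorname{GL}(V)$-equivariant, so the image of the product map $R_{d_1} \otimes \cdots \otimes R_{d_r} \to R_t$ is a $\operatorname{GL}(V)$-subrepresentation; hence every irreducible $\operatorname{GL}(V)$-summand of $R_t$ must already appear as a summand of some such tensor product $R_{d_1} \otimes \cdots \otimes R_{d_r}$.

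The heart of the argument is then to show that $\bigwedge^t(V) = S_{1^t}(V)$ cannot appear in any such tensor product $R_{d_1} \otimes \cdots \otimes R_{d_r}$ with $r \geq 2$ and each $d_i < t$. I would analyze this at the level of Schur functors using the Littlewood--Richardson rule. By hypothesis (1), for each factor $R_{d_i}$ with $d_i \leq t-1$ the summand $\bigwedge^{d_i}(V) = S_{1^{d_i}}(V)$ does \emph{not} occur; so every irreducible $S_{\mu}(V)$ appearing in $R_{d_i}$ has a partition $\mu \vdash d_i$ with $\mu \neq 1^{d_i}$, meaning $\mu$ has a part $\geq 2$, i.e.\ at most $d_i - 1$ rows. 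The plan is to combine the factors two at a time: if $S_{\mu}(V)$ and $S_{\mu'}(V)$ are summands with $\mu$ having a part $\geq 2$, then by the Littlewood--Richardson rule every partition $\lambda$ occurring in $S_{\mu}(V) \otimes S_{\mu'}(V)$ is obtained by adding the boxes of $\mu'$ to $\mu$ with at most one box per column in each step, so $\lambda$ retains a part of size $\geq 2$ (equivalently $\lambda \neq 1^{d_i + d_{i'}}$). Iterating over all $r$ factors shows every irreducible in $R_{d_1} \otimes \cdots \otimes R_{d_r}$ is of the form $S_{\lambda}(V)$ with $\lambda$ having a part $\geq 2$, hence $S_{1^t}(V)$ never appears.

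This contradicts hypothesis (2), which guarantees $\bigwedge^t(V) = S_{1^t}(V)$ does occur in $R_t$; the contradiction forces $\beta(R) \geq t$. The role of hypothesis (3), that $\dim V \geq t$, is to ensure that $\bigwedge^t(V) = S_{1^t}(V)$ is actually a nonzero representation of $\operatorname{GL}(V)$, so that ``occurs in $R_t$'' is a meaningful and nonvacuous condition and the representations $S_{1^{d_i}}(V)$ in hypothesis (1) are likewise nonzero; without it the column-length partitions would be forced to vanish and the combinatorial bookkeeping would degenerate.

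I expect the main obstacle to be making the Littlewood--Richardson step fully rigorous rather than heuristic: precisely, verifying that the tensor product of two Schur functors, neither of which is $S_{1^k}(V)$ (in the sense that at least one has a part $\geq 2$), contains no single-column $S_{1^\ell}(V)$ summand. The clean way to see this is that the coefficient of $S_{1^\ell}(V)$ in $S_{\mu}(V) \otimes S_{\mu'}(V)$ is the Littlewood--Richardson coefficient $c^{1^\ell}_{\mu,\mu'}$, which is nonzero only when $\mu$ and $\mu'$ both fit inside a single column, i.e.\ $\mu = 1^{d}$ and $\mu' = 1^{d'}$; since at least one factor has a part $\geq 2$ this coefficient vanishes. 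I would state this as a short combinatorial lemma and apply it inductively across the $r \geq 2$ factors.
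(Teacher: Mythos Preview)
Your argument is correct and follows essentially the same strategy as the paper: use $\operatorname{GL}(V)$-equivariance of multiplication together with the Littlewood--Richardson fact that $c^{1^{a+b}}_{\mu,\mu'} \neq 0$ forces $\mu = 1^a$ and $\mu' = 1^b$, so that $\bigwedge^t(V)$ cannot lie in the image of lower-degree products given hypothesis~(1). The only difference is organizational: the paper observes that it suffices to look at the single $\operatorname{GL}(V)$-equivariant map $\varphi\colon \bigoplus_{i=1}^{\lfloor t/2\rfloor} R_i \otimes R_{t-i} \to R_t$, since any product of $r\geq 2$ homogeneous elements of degree $<t$ can be grouped as a product of two homogeneous elements each of degree $<t$; this lets one apply the Littlewood--Richardson statement once rather than iterating inductively over $r$ factors as you do.
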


\begin{proof}
We have a $\operatorname{GL}(V)$ equivariant map $R_i \otimes R_{t-i} \rightarrow R_t$ given by multiplication. We can collect the maps for various $i$ to get a map 
 $$\varphi: \bigoplus\limits_{i =1}^{\lfloor t/2 \rfloor} R_i \otimes R_{t-i} \rightarrow R_t.$$ It is clear that if R is generated by invariants of degree $< t$, then $\varphi$ is surjective. 
 
Let  $\lambda \vdash a$ and $\mu \vdash b$. Recall the well known identity $$S_{\lambda}(V) \otimes S_{\mu}(V)~=~\oplus_{\nu} (S_{\nu}(V))^{c_{\lambda,\mu}^{\nu}},$$ where $c_{\lambda,\mu}^{\nu}$ are the Littlewood-Richardson coefficients. By the Littlewood-Richardson rule, if $\bigwedge^{a+b}(V) = S_{1^{a+b}}(V)$ appears in the decomposition for $S_{\lambda}(V) \otimes S_{\mu}(V)$, then $\lambda = 1^a$ and $\mu = 1^b$.
 
We assume that for $1 \leq i  \leq t - 1$, $\bigwedge^i(V) = S_{1^i}(V)$ does not occur in the decomposition for $R_i$. Hence, $\bigwedge^t(V)$ does not occur in the decomposition for $R_i \otimes R_{t-i}$, and hence does not occur in the decomposition for $\bigoplus_{i =1}^{\lfloor t/2 \rfloor} R_i \otimes R_{t-i}$, and thus not in the decomposition of its image under $\varphi$. But in the decomposition of $R_t$, there is at least one copy of $\bigwedge^t(V)$. Since $\dim V \geq t$, we are guaranteed that $\bigwedge^t(V)$ is non-empty. So $\varphi$ cannot be surjective.
 
Thus $R$ cannot be generated in degree $<t$ as the invariants corresponding to the isotypic component for $\bigwedge^t(V)$ cannot be generated by smaller degree invariants.
\end{proof}

\begin{proof}[Proof of Theorem~\ref{t}]
We want to apply Proposition~\ref{qwerty} to the ring $R(n,m)$, via the combinatorial description in Section~\ref{comb}. Take $W = \C^n \otimes \C^n$ and $V = \C^m$. Then by the results in Section~\ref{comb}, we have that

\[ R_i = \begin{cases}
         \bigoplus\limits_{\lambda \vdash kn} (S_{\lambda}(V))^{a_{k^n,k^n,\lambda}} & \text{if}\  i =  kn,\\
          0 & \text{otherwise}.
          \end{cases}\]         

From the representation theory of the symmetric group, we know that $T_{k^n} \otimes T_{1^{kn}} = T_{n^k}$. Moreover, since the Kronecker coefficients are invariant under permutations, we have \[ a_{k^n,k^n,1^{kn}} = \begin{cases} 
      0 & \text{if}\  k \neq n,\\
      1 & \text{if} \ k=n.
   \end{cases}
\]

This gives the first two conditions required for Proposition~\ref{qwerty} (for $t = n^2$) . Since we assume $\dim V = m \geq n^2$, the third condition holds as well. Hence we have $\beta(R(n,m)) \geq n^2$. 
   
   \end{proof}

In fact, we can describe explicitly these invariants in degree $n^2$. For a matrix $M$, denote by $\overline{M}$, a column matrix obtained by stacking the columns of $M$.

\begin{example}
If $M = \left( \arraycolsep=4 pt\def\arraystretch{1.4} \begin{array}{cc}
a& b \\
c & d \\
 \end{array} \right)$, then $\overline{M} = \left( \arraycolsep=4 pt\def\arraystretch{1.4} \begin{array}{c}
a \\
c \\
b \\
d \\
 \end{array} \right).$
\end{example}

Define a function $f$ on $n^2$-tuples of $n\times n$ matrices by $$f(X_1,X_2,\dots,X_{n^2}) = \det \left(\begin{array}{c|c|c|c} \overline{X_1} & \overline{X_2} & \dots & \overline{X_{n^2}} \\ \end{array} \right).$$

Then $f \in R(n,n^2)_{n^2}$ is the unique invariant (upto scalars) in the isotypic component corresponding to $\bigwedge^{n^2}(\C^{n^2})$. For $n = 2$, this is the invariant of degree $4$ constructed in \cite{Dom00a}.

 \begin{remark}
Theorem~\ref{t} gives a tight lower bound for $n \leq 2$, i.e $\beta_U(n) = n^2$ for $n \leq 2$.
\end{remark}

\section{Computing Hilbert series} \label{hilb}

We have seen in Section~\ref{sec1} that for the cases $m =1,2$, $R(n,m)$ is a polynomial ring. It is also clear that $R(1,m)$ is a polynomial ring since $\operatorname{SL}_1$ is trivial. For $R(2,m)$, the Hilbert series has already been computed by Domokos in \cite{Dom00a}. So, we restrict to the cases $m \geq 3$, $n \geq 3$.  Notice that for these cases, we have $\deg H(R(n,m),t) = - \dim \operatorname{Mat}_{n,n}^m$, as discussed in Section~\ref{u.b}.

\begin{remark} \label{den.num}
If we can compute a denominator for the  Hilbert series of $R(n,m)$, then we can compute the polynomial in the numerator once we know the dimensions of the graded pieces of $R(n,m)$ upto the degree of the numerator, which is given by  
 $$\deg(Numerator) = \deg(Denominator) - n^2m.$$
\end{remark}

\begin{remark}\label{comp.diff}
Computing $\dim R(n,m)_{kn}$ is a hard task even with a computer, and is the bottleneck for these computations. So, it is desirable to minimize the degree of the numerator as much as possible, and hence it is desirable to minimize the degree of the denominator. 
\end{remark}

Fortunately, the theory of universal denominators (see \cite{D}, \cite{DW2}) gives us strong results in our case. We first renormalize our grading to agree with the grading in \cite{DW2}.

\begin{definition}
The renormalized Hilbert series is defined as 
$$\widetilde{H}(R(n,m),t) = \sum\limits_{k = 0}^{\infty} \dim R(n,m)_{kn} t^k.$$
\end{definition}

\begin{remark}
The usual Hilbert series and the renormalized Hilbert series are related by $$\widetilde{H}(R(n,m),t^n) = H(R(n,m),t).$$
\end{remark}

The most relevant result is \cite[Corollary~1]{DW2}. We restate it for our situation.

\begin{proposition} [\cite{DW2}] \label{denom}
Let $r$ be the Krull dimension of $R(n,m)$. Then 
$$\widetilde{H}(R(n,m),t) = \frac{P(t)}{(1-t)^r},$$

where $P(t)$ is a polynomial with integer coefficients.
\end{proposition}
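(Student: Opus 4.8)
The plan is to deduce the statement directly from the universal denominator theorem for rings of quiver semi-invariants, \cite[Corollary~1]{DW2}, after carefully matching up the combinatorial setup and, crucially, the grading. First I would recall from Section~\ref{det.desc} that $R(n,m)$ is exactly the ring of semi-invariants of the $m$-Kronecker quiver at the dimension vector $(n,n)$. The result in \cite{DW2} is phrased for the grading of the semi-invariant ring by the weight lattice (equivalently, by the integer $k$ for which a semi-invariant arises as a coefficient of $\det(\chi_k)$), and this is precisely the renormalized grading: Section~\ref{det.desc} shows that the coefficients of $\det(\chi_k)$ span $R(n,m)_{kn}$, so renormalized degree $k$ corresponds to a $kn$-fold determinantal weight. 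Thus $\widetilde{H}(R(n,m),t)$ is literally the Hilbert series to which \cite[Corollary~1]{DW2} applies, and the needed numerical input is the exponent $r$ in the denominator, which by Lemma~\ref{dim ring} is $r = \dim R(n,m) = mn^2 - 2(n^2-1)$ for $m \geq 3$. (For $m = 1,2$ the ring is a polynomial ring and the statement is immediate.) With the grading identified and $r$ fixed, \cite[Corollary~1]{DW2} gives $\widetilde{H}(R(n,m),t) = P(t)/(1-t)^r$ with $P \in \mathbb{Z}[t]$, which is the claim.

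The conceptual content, and the step I expect to be the real obstacle, is precisely that the denominator may be taken to be a pure power of $(1-t)$. General Hilbert--Serre theory together with Cohen--Macaulayness (Hochster--Roberts) only yields $\widetilde{H} = N(t)/\prod_{i=1}^r (1-t^{d_i})$ for the renormalized degrees $d_i$ of some homogeneous system of parameters; for instance, for $n=3$ and $m\geq 3$, Proposition~\ref{hsop} exhibits such a hsop in renormalized degree $2$, giving the denominator $(1-t^2)^r$. Rewriting this over $(1-t)^r$ requires that $\prod_i (1 + t + \cdots + t^{d_i-1})$ divide $N(t)$, equivalently that every pole of $\widetilde{H}$ at a root of unity $\zeta \neq 1$ cancel, so that the only pole is the order-$r$ pole at $t=1$. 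This is not a formal consequence of Cohen--Macaulayness, and it is exactly the cancellation that the theorem of \cite{DW2} establishes for quiver semi-invariants; invoking it completes the proof.

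As a sanity check I would verify the $R(3,3)$ case recorded in the introduction. There $r = 9\cdot 3 - 16 = 11$, and substituting $s = t^3$ into $H(R(3,3),t) = (1+t^9)/((1-t^3)^{10}(1-t^6))$ gives $\widetilde{H}(R(3,3),s) = (1+s^3)/((1-s)^{10}(1-s^2))$. Since $1-s^2 = (1-s)(1+s)$ and $1+s^3 = (1+s)(1-s+s^2)$, the factor $(1+s)$ cancels and we obtain $\widetilde{H}(R(3,3),s) = (1-s+s^2)/(1-s)^{11}$, confirming both the exponent $r = 11$ and the integrality (indeed here $P(s) = 1 - s + s^2$, with a negative coefficient, which illustrates that $P$ need not be the generating function of secondary invariants over a degree-one hsop). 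This concrete collapse of the non-trivial cyclotomic factor is the phenomenon that \cite{DW2} guarantees in general.
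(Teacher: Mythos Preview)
Your proposal is correct and matches the paper's approach: the paper does not give its own proof but simply cites \cite[Corollary~1]{DW2} and restates it for $R(n,m)$, and your argument amounts to the same citation together with a (correct) justification that the renormalized grading is the one to which that corollary applies. Your additional discussion of why Cohen--Macaulayness alone is insufficient and the $R(3,3)$ sanity check are accurate and helpful, but go beyond what the paper itself provides.
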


This gives us denominators of the lowest degree possible, making several computations accessible. Domokos proved a functional equation for the Hilbert series of $R(n,m)$ for $m\geq 3$, $n\geq 2$ in \cite{Dom00a}. This implies that when we use the universal denominator, the coefficients of the polynomial in the numerator are palindromic, so we need to compute only half the coefficients. In view of Remarks~\ref{den.num}-\ref{comp.diff}, this makes a few more computations feasible.

We give a few explicit computations that we are able to compute. 

\begin{enumerate} 
\setlength\itemsep{1em}

\item $\widetilde{H}(R(3,3),t) = \displaystyle \frac{1-t + t^2}{(1-t)^{11}}$.
 This was already computed by Domokos in \cite{Dom00b}. We remark that even though $\beta(R(3,3)) = 9$, we only needed the $\dim (R(3,3)_3)$ to compute the Hilbert series.

\item $\widetilde{H}(R(3,4),t) = \displaystyle \frac{1 + 20t^2 +  20t^3 +  55t^4 + 20t^5 + 20t^6 + t^8}{(1-t)^{20}}.$

\item $\widetilde{H}(R(3,5),t) = \displaystyle \frac{P(t)}{(1-t)^{29}},$ where the coefficients of $P(t)$ are 1, 6, 141, 931, 4816, 13916, 27531, 33391, 27531, 13916, 4816, 931, 141, 6, 1.

\item $\widetilde{H}(R(3,6),t) = \displaystyle \frac{P(t)}{(1-t)^{38}},$ where the coefficients of $P(t)$ are 1, 18, 626, 10246, 114901, 830484, 4081260, 13763184, 32507115, 54176230, 64224060, 54176230, 32507115, 13763184, 4081260, 830484, 114901, 10246, 626, 18, 1.

\item $\widetilde{H}(R(3,7),t) = \displaystyle \frac{P(t)}{(1-t)^{47}},$ where the coefficients of $P(t)$ are 1, 37, 2033, 62780, 1301634, 18067706, 173883458, 1186198090, 5851715254, 21192401230, 57013957462, 114926408114, 174616665986, 200665719450, 174616665986, 114926408114, 57013957462, 21192401230, 5851715254, 1186198090, 173883458, 18067706, 1301634, 62780, 2033, 37, 1.

\item $\widetilde{H}(R(4,3),t) = \displaystyle \frac{1 -3t + 9t^2 + 8t^3 + 9t^4 - 3t^5 + t^6}{(1-t)^{18}}.$

\item $\widetilde{H}(R(4,4),t) = \displaystyle \frac{P(t)}{(1-t)^{34}},$ where the coefficients of $P(t)$ are 1, 1, 141, 981, 8534, 39193, 139348, 325823, 556368, 652716, 556368, 325823, 139348, 39193, 8534, 981, 141, 1, 1.

\item $\widetilde{H}(R(5,3),t) = \displaystyle \frac{P(t)}{(1-t)^{27}}$, where the coefficients of $P(t)$ are 1, -6, 36, -70, 231, -189, 419, -189, 231, -70, 36, -6, 1.

\end{enumerate}

\subsection*{Acknowledgements}
I would like to thank my advisor Harm Derksen for suggesting the problem and his guidance throughout the project. I would also like to thank J.~Stembridge for useful conversations. For the computations in this paper, I used J.~Stembridge's SF package (see \cite{Stem}).


\begin{thebibliography}{99}
\bibitem{ANS} B.~Adsul, S.~Nayak, and K.~V.~Subrahmanyam. {\it A geometric approach to the Kronecker problem II: rectangular shapes, invariants of matrices and the  Artin-Procesi theorem.} preprint, 2007.
\bibitem{AS} B.~Adsul and K.~V.~Subrahmanyam. {\it A geometric approach to the Kronecker problem I: the two row case.} Proceedings Mathematical Sciences, 118(2):213-226, 2008.
\bibitem{D} H.~Derksen. {\it Universal denominators of Hilbert series.} Journal of Algebra, 285(2): 586-607, 2005
\bibitem{DK} H.~Derksen and G.~Kemper. {\it Computational invariant theory.} Invariant Theory and Algebraic Transformation Groups, I., Encyclopaedia of Mathematical Sciences, 130. Springer-Verlag, 2002.
\bibitem{DW} H.~Derksen and J.~Weyman. {\it Semi-invariants of quivers and saturation of Littlewood-Richardson co-efficients.} Journal of the American Math. Soc. 13(3):467-479, 2000.
\bibitem{DW2} H.~Derksen and J.~Weyman. {\it On Littlewood-Richardson polynomials.} Journal of Algebra,  255(2):247-257, 2002.
\bibitem{Dom00a} M.~Domokos. {\it Poincar\'e series of semi-invariants of 2 $\times 2$ matrices.} Linear Algebra and its Applications, 310(1):183-194, 2000.
\bibitem{Dom00b} M.~Domokos. {\it Relative invariants of $3 \times 3$ matrix triples.} Linear and Multilinear Algebra, 47(2):175-190, 2000.
\bibitem{DZ} M.~Domokos and A.~N.~Zubkov. {\it Semi-invariants of quivers as determinants.} Transformation groups 6(1):9-24, 2001.
\bibitem{Fulton} W.~Fulton. {\it Young Tableaux.} With applications to representation theory and geometry. London Mathematical Society Student Texts, 35. Cambridge University Press, Cambridge, 1997 .
\bibitem{Hap} D.~Happel. {\it Relative invariants of quivers of finite and tame type.} J. Algebra, 86:315-335, 1984.
\bibitem{Hap2} D.~Happel. {\it Relative invariants and subgeneric orbits of quivers of finite and tame type.} J. Algebra, 78:445-459, 1982.
\bibitem{HR} M.~Hochster and J.~L.~Roberts. {\it Rings of invariants of reductive groups acting on regular rings are Cohen-Macaulay.} Adv. in Math. 13:115-175, 1974.
\bibitem{IQS} G.~Ivanyos, Y.~Qiao and K.~V.~Subrahmanyam. {\it Non-commutative Edmonds' problem and matrix semi-invariants.}  arXiv:1508.00690 [cs.DS], 2015.
\bibitem{IQS2} G.~Ivanyos, Y.~Qiao and K.~V.~Subrahmanyam. {\it On generating the ring of matrix semi-invariants.} arXiv:1508.01554 [cs.CC], 2015.
\bibitem{Kac2} V.~G.~Kac. {\it Infinite root systems, representations of graphs and invariant theory. II.} J.Algebra, 78:141-162, 1982.
\bibitem{KP} H.~Kraft and C.~Procesi. {\it Classical Invariant Theory : A primer.} http://www.unibas.math.ch.
\bibitem{Koike} K.~Koike. {\it Relative invariants of the polynomial rings over the $A_r$ and $\tilde{A_r}$ quivers.} Adv. Math, 105:166-189, 1994.
\bibitem{Knop2} F.~Knop. {\it Uber die Glattheit von Quotientenabbildungen.} Manuscripta Math. 56 (4): 419-427, 1986.
\bibitem{Knop1} F.~Knop. {\it Der Kanonische Modul eines Invariantenringes.} J. Algebra 127(1): 40-54, 1989.
\bibitem{Mac} I.~G.~Macdonald. {\it Symmetric functions and Hall polynomials.} Second edition. With contributions by A. Zelevinsky. Oxford Mathematical Monographs. Oxford Science Publications. The Clarendon Press, Oxford University Press, New York, 1995.
\bibitem{Sch1} A.~Schofield. {\it Semi-invariants of quivers.} J.London Math. Soc. (43):383-395, 1991
\bibitem{SVd}  A.~Schofield and M.~ van der Bergh. {\it Semi-invariants of quivers for arbitrary dimension vectors.} Indag. Mathem., N.S 12(1):125-138, 2001.
\bibitem{Stanley} R.~P.~Stanley. {\it Enumerative Combinatorics Vol. 2.} With a foreword by Gian-Carlo Rota and appendix~1 by Sergey Fomin. Cambridge Studies in Advanced Mathematics, 62. Cambridge University Press, Cambridge, 1999.
\bibitem{Stem} J.~Stembridge. {\it A Maple package for symmetric functions.} J. Symbolic Comput. 20:755?768, 1993.
\bibitem{Teranishi} Y.~Teranishi. {\it The ring of invariants of matrices.} Nagoya Math. J. 104:149-161, 1986.



\end{thebibliography}
\end{document}